\documentclass[12pt
%,draft
]{article}

\usepackage{amssymb}
\usepackage{amsmath}
\usepackage{mathrsfs}
\usepackage{theorem}

\usepackage{newtxtext}

\newtheorem{theorem}{Theorem}
\newtheorem{lemma}{Lemma}
\newtheorem{corollary}{Corollary}
\newenvironment{proof} {{\bf Proof.}}{\hfill \fbox{}\\ \smallskip}

{\theorembodyfont{\rmfamily} \newtheorem{remark}{Remark}}
{\theorembodyfont{\rmfamily} }
{\theorembodyfont{\rmfamily} }

\newcommand{\C}{\mathbb{C}}

\newcommand{\h}{\eta}

\newcommand{\de}{\partial}
\newcommand{\ga}{\gamma}
\newcommand{\ep}{\varepsilon}

\newcommand{\vf}{\varphi}

\newcommand{\p}{\psi}

\newcommand{\si}{\sigma}

\newcommand{\al}{\alpha}
\newcommand{\be}{\beta}

\newcommand{\la}{\lambda}
\newcommand{\La}{\Lambda}

\newcommand{\Om}{\Omega}
\newcommand{\om}{\omega}

\newcommand{\lan}{\langle}
\newcommand{\ran}{\rangle}

\newcommand{\essinf}{\mathop{\rm ess\, inf}}
\newcommand{\esssup}{\mathop{\rm ess\, sup}}
\newcommand{\tr}{\operatorname{\rm tr}}

\renewcommand\leq{\leqslant}
\renewcommand\geq{\geqslant}

\newcommand{\R}{\mathbb R}

\newcommand\Cspt{\mathaccent"017{C}}
\newcommand{\Hspt}{\mathaccent"017{H}}

\renewcommand\Re{\mathop{\mathbb R \rm{e}}\nolimits}

\newcommand\A{\mathop{\mathscr A}\nolimits}

\newcommand\dist{\operatorname{dist}}
\newcommand\spt{\text{spt}\,}

\title{The functional dissipativity of certain systems of partial differential equations}

\author{A. Cialdea
\thanks{Department of Mathematics, Computer Sciences and Economics,
University of Ba\-si\-li\-ca\-ta, V.le dell'Ateneo Lucano, 10, 85100 Potenza, Italy.
 \textit{email:}
cialdea@email.it.}\and
V. Maz'ya 
\thanks{Department of Mathematics, Link\"oping University,
SE-581 83, Link\"oping, Sweden.
\textit{email}: vladimir.mazya@liu.se.}
}
\date{}    % optional

\begin{document}

\maketitle

\begin{abstract}
In the present paper we consider the functional dissipativity of 
the Dirichlet problem for 
systems of partial differential
operators of the form $\de_{h} (\A^{hk}(x)\de_{k})$ ($\A^{hk}$ being $m\times m$ matrices with
complex valued $L^{1}_{\text{loc}}$ entries).  
In the particular case of the operator $ \de_{h} (\A^{h}(x)\de_{h})$ (where $\A^{h}$ are 
$m\times m$ matrices) we obtain 
algebraic necessary and sufficient conditions. 
We give also three different notions of functional ellipticity
and investigate  the relations between them and the functional dissipativity
for the operators in question.
\end{abstract}

 \section{Introduction}
 
%      \cite{CM2021}  \cite{CM2022} 
% \cite{CM20222}

 The concept of $L^{\Phi}$-dissipativity (or functional dissipativity) of a linear operator
 was recently introduced in  \cite{CM2021}. In particular, if $A$ is the scalar second order
 partial differential operator $\nabla(\A \nabla)$, $\A$ being a square matrix whose entries are complex
 valued $L^{\infty}$-functions defined in a domain $\Om\subset \R^{n}$, we say that $A$
 is $L^{\Phi}$-dissipative with respect to a given positive function $\vf: \R^{+}\to \R^{+}$ if
 $$
 \Re \int_{\Om} \lan \A \nabla u, \nabla(\vf(|u|)\, u)\ran\, dx \geq 0
 $$
 for any $u\in\Hspt^{1}(\Om)$ such that $\vf(|u|)\, u \in \Hspt^{1}(\Om)$.
 
 We say that $A$
 is strict $L^{\Phi}$-dissipative if there exists $\kappa>0$ such that
 $$
 \Re \int_{\Om} \lan \A \nabla u, \nabla(\vf(|u|)\, u)\ran\, dx \geq \kappa \int_{\Om} 
 |\nabla(\sqrt{\vf(|u|)}\, u)|^{2}dx 
 $$
 for any $u\in\Hspt^{1}(\Om)$ such that $\vf(|u|)\, u \in \Hspt^{1}(\Om)$.

The functional dissipativity is an extension of the concept of $L^{p}$-dissipativity, which 
is obtained taking $\vf(t)=t^{p-2}$ ($1<p<\infty$).
In a series of papers   \cite{CM2005,CM2006,CM2013,CM2018}
we have studied the problem of characterizing the $L^{p}$-dissipativity of scalar and
matrix partial differential operators. A recent survey can be found in \cite{CM20222}.
 In the monograph \cite{CMbook}  these results
are considered in the more general frame of the theory of semi-bounded operators.

Later the concept of $p$-ellipticity, which is closely related to the strict $L^{p}$-dissipativity,  has been introduced. It has been considered in
different papers by  Carbonaro and Dragi\v{c}evi\'{c} \cite{CD20201,CD20202},  Dindo\v{s} and Pipher \cite{DP20191,DP20192,DP20201,DP20202},
Egert \cite{egert}. 
It is worthwhile to remark that, if the partial differential operator has no lower order terms, the concepts of $p$-ellipticity
and strict  $L^p$-dissipativity coincide. 

Recently Dindo\v{s}, Li and Pipher \cite{DLP} have considered the $p$-ellipticity for
linear systems of partial differential equations, giving three different definitions: the strong $p$-ellipticity, the integral  $p$-ellipticity
and the weak $p$-ellipticity, showing that strong $p$-ellipticity $\Rightarrow$  integral  $p$-ellipticity
$\Rightarrow$ weak $p$-ellipticity.
 
 The aim of the present paper is to study the $L^{\Phi}$-dissipativity of the  operator
 $$
 Au = \de_{h}(\A^{h}(x)\de_{h}u)
 $$
 where $\A^{h}(x)=\{a_{ij}^{h}(x)\}$  ($i,j=1,\ldots,m$)
are matrices
with complex locally integrable entries defined in a domain 
$\Om\subset\R^{n}$ ($h=1,\ldots,n$).
We give necessary and sufficient condition for the functional dissipativity of such
an  operator
and
also for its functional ellipticity, a concept which was introduced in \cite{CM2021}
for scalar operators and generalizes the concept of $p$-ellipticity.
As done in  \cite{DLP} for $p$-ellipticity,  here we consider three kinds of functional ellipticity for
the operator $A$: the strong, the integral and the weak functional ellipticity. We prove  that all of them, but the strong one,  are equivalent to the
strict $L^{\Phi}$-dissipativity of $A$. We prove also that the integral
functional ellipticity of this operator takes place if and only if there exists $\kappa>0$ such that
$A-\kappa\Delta$ is $L^{\Phi}$-dissipative. These results appear  to be interesting
even in the particular case of $p$-ellipticity.

The present paper is organized as follows. 
After some preliminaries in Section \ref{sec:prel}, we 
study the $L^\Phi$-dissipativity for systems of ordinary differential
equations in Section \ref{sec:ODE}. 
The particular case of real operators is considered in Section 
 \ref{sec:real}, where several criteria are given in terms of eigenvalues
 of the coefficient matrices.
 
 Section \ref{sec:PDE} is devoted to the $L^{\Phi}$-dissipativity
 of operator $A$.  Finally, in the last Section \ref{sec:ellipt} 
some concepts of functional ellipticity for systems are introduced and 
 the relations between them are investigated for the operators 
 considered in the paper.

\section{Preliminaries}\label{sec:prel}

The positive function $\vf$ is required to satisfy the following conditions

\renewcommand{\labelenumi}{(\roman{enumi})}
\renewcommand{\theenumi}{(\roman{enumi})}
\begin{enumerate}
	\item\label{item1} $\vf \in C^{1}((0,+\infty))$;
	\item $(s\, \vf(s))'>0$ for any $s>0$;
	\item the range of the strictly increasing function $s\, \vf(s)$ is  $(0,+\infty)$;	
	\item\label{item4} there exist two positive constants $C_{1}, C_{2}$  and a real number $r>-1$ such that
$$
C_{1} s^{r}\leq (s\vf(s))' \leq C_{2}\, s^{r}, \qquad s\in (0,s_{0})
$$
for a certain $s_{0}>0$. If $r=0$ we require more restrictive 
conditions: there exists the finite limit $\lim_{s\to 
0^+}\vf(s)=\vf_{+}(0)>0$ 
and  $\lim_{s\to 0^+}s\, \vf'(s)=0$.
\item\label{item5} 
There exists $s_{1}>s_{0}$ such that 
$$
   \vf'(s)\geq 0  \text{ or }  \vf'(s)\leq 0 \qquad \forall\ s\geq s_{1}
   . 
$$
\end{enumerate}

 We note that from condition  \ref{item4} it follows that, for any $r>-1$, 
	$$
		\vf(s)  \simeq s^{r}, \qquad s\in (0,s_{0}).
$$

Let us denote by $t\, \psi(t)$ the inverse function of $s\, \vf(s)$. 
The functions
$$
\Phi(s)=\int_{0}^{s} \si\, \vf(\si)\, d\si, \qquad
\Psi(s)= \int_{0}^{s} \si\, \psi(\si)\, d\si
$$
are conjugate Young functions. Moreover
the function $\vf$ satisfies conditions \ref{item1}-\ref{item5}
if and only if the function $\psi$ satisfies
the same conditions with $-r/(r+1)$ instead of $r$
(see \cite[Lemma 1]{CM2021}).

In the following we shall use the function $\La$ which is defined by
the relation 
$$\Lambda\left(s\sqrt{\vf(s)}\right)= - \frac{s\, \vf'(s)}{s\,\vf'(s)+2\, 
\vf(s)}\, .
$$

Let us consider a  general system of the form
\begin{equation}
    A=\de_{h} (\A^{hk}(x)\de_{k})
    \label{eq:A}
\end{equation}
where  $\de_{k} =\de / \de x_{k}$ and $\A^{hk}(x)=\{a^{hk}_{ij}(x)\}$ are 
$m\times m$ matrices 
whose elements are complex valued $L^{1}_{\text{loc}}$-functions  defined 
in a domain $\Om\subset \R^n$ 
$(1\leq i,j\leq 
m,\ 1\leq h,k \leq n)$.  Here and in the sequel, we adopt the standard summation convention
on repeated indices. We say that the operator \eqref{eq:A} is $L^\Phi$-dissipative if
\begin{equation}\label{eq:defdiss0}
\Re \int_{\Om} \lan \A^{hk} \de_{k} u, \de_{h}(\vf(|u|)\, u)\ran\, dx \geq 0
\end{equation}
for any $u\in [\Cspt^{1}(\Om)]^m$  such that $\vf(|u|)\, u\in 
[\Cspt^{1}(\Om)]^m$.

We say that the operator \eqref{eq:A} is strict $L^\Phi$-dissipative if
there exists $\kappa>0$ such that
\begin{equation}\label{eq:defdisstrict}
\Re \int_{\Om} \lan \A^{hk} \de_{k} u, \de_{h}(\vf(|u|)\, u)\ran\, dx \geq 
\kappa \int_{\Om} |\nabla(\sqrt{\vf(|u|)}\, u)|^{2}dx
\end{equation}
for any $u\in [\Cspt^{1}(\Om)]^m$  such that $\vf(|u|)\, u\in 
[\Cspt^{1}(\Om)]^m$.

\begin{remark}
The coefficients of the operator \eqref{eq:A} are
supposed to be $L^{1}_{\text{loc}}$-functions  and not
 $L^{\infty}$-functions, as in \cite{CM2022}. This is the reason why 
 in definitions \eqref{eq:defdiss0} and \eqref{eq:defdisstrict} we consider  
 the space $[\Cspt^{1}(\Om)]^m$ instead of
$[\Hspt^{1}(\Om)]^m$, as done in  \cite{CM2022}.
\end{remark}

\begin{remark}\label{rem:1} 
If $r\geq 0$ in condition \ref{item4}, $u\in [\Cspt^{1}(\Om)]^m$  implies $\vf(|u|)\, u\in 
[\Cspt^{1}(\Om)]^m$ and then  we can say that the operator \eqref{eq:A} is $L^\Phi$-dissipative if
and only if \eqref{eq:defdiss0} holds for any $u\in [\Cspt^{1}(\Om)]^m$.
If $r<0$, setting $w=\vf(|u|)\, u$, i.e. $u=\psi(|w|)\, w$, we  can write  condition \eqref{eq:defdiss0} as
\begin{equation}\label{eq:rewrite}
\Re \int_{\Om} \lan (\A^{kh})^{*} \de_{k} w, \de_{h}(\psi(|w|)\, w)\ran\, dx \geq 0
\end{equation}
for any $w\in [\Cspt^{1}(\Om)]^{m}$ such that $\psi(|w|)\, w\in [\Cspt^{1}(\Om)]^{m}$.
Thanks to \cite[Lemma 1]{CM2021}, the function $\psi$ satisfies
 conditions \ref{item1}-\ref{item5} with $-r/(r+1)$ instead of $r$. The number
 $-r/(r+1)$ being greater than $0$, 
 the operator \eqref{eq:A} is $L^\Phi$-dissipative if
and only if \eqref{eq:rewrite} holds for any $w\in [\Cspt^{1}(\Om)]^m$.
The same remark applies to \eqref{eq:defdisstrict}, since 
$\sqrt{\psi(|w|)}\, w = \sqrt{\vf(|u|)}\, u$ (see \cite[formula (43)]{CM2021}).
\end{remark}

We end this Section by proving a Lemma similar to \cite[Lemma 6]{CM2022}.

\begin{lemma}\label{le:lemma1}
Let $\Om$ be  a domain in $\R^n$. The operator \eqref{eq:A} is $L^{\Phi}$-dissipative if and only if
\begin{equation}\label{eq:cond1}
\begin{gathered}
\Re \int_{\Om} \Big(\lan \A^{hk} \de_k	 v, \de_h v\ran 
+\La(|v|)\, |v|^{-2} \lan \left(\A^{hk}- (\A^{kh})^*\right)v, \de_h v \ran \Re \lan v, \de_{k} v\ran  \\
- \La^{2}(|v|)\, |v|^{-4} \lan \A^{hk} v,v\ran \Re \lan v, \de_{k} v\ran \Re \lan v, \de_{h} v\ran
\Big) dx  \geq 0
\end{gathered}
\end{equation}
for any $v \in [\Cspt^{1}(\Om)]^m$. Here and in the sequel the integrand
is extended by zero on the set where $v$ vanishes.
\end{lemma}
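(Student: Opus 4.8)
The plan is to reduce the dissipativity inequality \eqref{eq:defdiss0} to \eqref{eq:cond1} through the substitution
$$
v=\sqrt{\vf(|u|)}\,u .
$$
Since $(s\sqrt{\vf(s)})'=(2\vf(s)+s\vf'(s))/(2\sqrt{\vf(s)})$, whose numerator $\vf+(\vf+s\vf')$ is positive because $\vf>0$ and $(s\vf(s))'>0$, the scalar map $s\mapsto s\sqrt{\vf(s)}$ is a strictly increasing bijection of $(0,+\infty)$ onto itself. As $v$ and $u$ are positive real multiples of one another and $|v|=|u|\sqrt{\vf(|u|)}$, the correspondence $u\mapsto v$ is a pointwise bijection whose inverse is recovered through $\p$. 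Recalling from \cite[Lemma 1, formula (43)]{CM2021} that $\sqrt{\p(|w|)}\,w=\sqrt{\vf(|u|)}\,u$ when $w=\vf(|u|)\,u$, one checks that $u\mapsto v$ carries $\{u\in[\Cspt^{1}(\Om)]^m:\vf(|u|)\,u\in[\Cspt^{1}(\Om)]^m\}$ bijectively onto $[\Cspt^{1}(\Om)]^m$; by Remark \ref{rem:1} it is moreover enough to argue under the hypothesis $r\geq 0$.

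Next I would run the chain rule. Writing $g=\sqrt{\vf}$ and $t=|u|$, from $\de_k t=t^{-1}\Re\lan u,\de_k u\ran$ one gets $\de_k v=g\,\de_k u+(g'/t)\Re\lan u,\de_k u\ran\,u$ and, since $\vf(|u|)\,u=g(t)\,v$, also $\de_h(\vf(|u|)\,u)=g\,\de_h v+(g'/t)\Re\lan u,\de_h u\ran\,v$. Taking $\Re\lan u,\cdot\ran$ in the first identity yields $\Re\lan u,\de_k u\ran=\Re\lan v,\de_k v\ran/[g(g+tg')]$, after which both quantities are expressed purely through $v$ and $\de v$:
$$
\de_k u=\frac1g\,\de_k v-\frac{g'}{t\,g^{3}(g+tg')}\,\Re\lan v,\de_k v\ran\,v,\qquad
\de_h(\vf(|u|)\,u)=g\,\de_h v+\frac{g'}{t\,g(g+tg')}\,\Re\lan v,\de_h v\ran\,v .
$$

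Substituting these into $\Re\lan\A^{hk}\de_k u,\de_h(\vf(|u|)\,u)\ran$ produces four terms, the leading one being $\Re\lan\A^{hk}\de_k v,\de_h v\ran$. Using $g'=\vf'/(2g)$, $g^{2}=\vf$ and $|v|^{2}=t^{2}\vf$ I would verify the two identities
$$
\frac{g'}{g^{2}t(g+tg')}=-\La(|v|)\,|v|^{-2},\qquad
\frac{(g')^{2}}{t^{2}g^{4}(g+tg')^{2}}=\La^{2}(|v|)\,|v|^{-4},
$$
both immediate from $\La(|v|)=-t\vf'/(t\vf'+2\vf)$. The two mixed terms share the coefficient $-\La(|v|)\,|v|^{-2}$; combining them after the relabelling $h\leftrightarrow k$ and the adjoint identity $\Re\lan\A^{hk}\de_k v,v\ran=\Re\lan(\A^{hk})^{*}v,\de_k v\ran$ reconstructs exactly the factor $\A^{hk}-(\A^{kh})^{*}$, while the fourth term carries $-\La^{2}(|v|)\,|v|^{-4}$. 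The integrand of \eqref{eq:defdiss0} thus equals, pointwise, the integrand of \eqref{eq:cond1}, and since the two function classes correspond bijectively, \eqref{eq:defdiss0} for all admissible $u$ is equivalent to \eqref{eq:cond1} for all $v\in[\Cspt^{1}(\Om)]^m$.

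The delicate point is not the algebra but the behaviour near the zero set of $v$, where the identities above were derived assuming $t=|u|>0$. There the weights $\La(|v|)|v|^{-2}$ and $\La^{2}(|v|)|v|^{-4}$ blow up, yet because $\Re\lan v,\de_k v\ran=\tfrac12\de_k|v|^{2}=O(|v|\,|\de v|)$ and $\lan\A^{hk}v,\de_h v\ran=O(|v|\,|\de v|)$, each correction term is in fact $O(|\de v|^{2})$ and hence integrable; the convention of extending the integrand by zero on $\{v=0\}$ makes this rigorous. The main obstacle, accordingly, is to justify carefully that $u\mapsto v$ is a genuine bijection of the stated classes with the correct $C^{1}$ regularity across the vanishing set — and it is precisely here that conditions \ref{item1}--\ref{item5}, the asymptotics $\vf(s)\simeq s^{r}$, and the reduction of Remark \ref{rem:1} are needed.
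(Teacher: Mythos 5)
Your substitution $v=\sqrt{\vf(|u|)}\,u$, the chain--rule computation, and the identification of the coefficients with $-\La(|v|)|v|^{-2}$ and $\La^{2}(|v|)|v|^{-4}$ are exactly the route the paper takes (it borrows the pointwise identity from the proof of Lemma 6 of \cite{CM2022}); your algebra checks out, and the reduction of the case $-1<r<0$ to $r\geq 0$ via Remark \ref{rem:1} and $\widetilde\La=-\La$ is also the paper's argument. So the sufficiency direction is essentially correct and identical in spirit.

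The genuine gap is in the necessity direction, and it is precisely the point you flag at the end but then treat as a justifiable technicality: the map $u\mapsto v$ is \emph{not} a bijection of $\{u\in[\Cspt^{1}(\Om)]^m:\vf(|u|)u\in[\Cspt^{1}(\Om)]^m\}$ onto $[\Cspt^{1}(\Om)]^m$. Take $\vf(s)=s^{r}$ with $r>0$, $n=m=1$, and $v$ equal to $x$ near the origin; the candidate preimage is $u=\operatorname{sgn}(x)\,|x|^{2/(r+2)}$, which is not $C^{1}$ at $0$ because the inverse of $s\mapsto s\sqrt{\vf(s)}\simeq s^{1+r/2}$ has unbounded derivative at the origin. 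Hence, given an arbitrary $v\in[\Cspt^{1}(\Om)]^m$ that vanishes somewhere, you cannot simply invert the substitution and feed the resulting $u$ into \eqref{eq:defdiss0}; the implication ``\eqref{eq:defdiss0} for all admissible $u$ $\Rightarrow$ \eqref{eq:cond1} for all $v$'' requires an approximation argument (e.g.\ regularizing $v$ near its zero set, applying the hypothesis to the smoothed preimages, and passing to the limit), which is what the paper invokes by referring to the first part of the necessity proof of Lemma 6 in \cite{CM2022}. Without supplying that argument your proof establishes only one implication of the stated equivalence.
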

\begin{proof}
{\it Sufficiency.}
First suppose $r\geq 0$ in condition \ref{item4}. Take $u\in  [\Cspt^1(\Om)]^m$ and define $v=\sqrt{\vf(|u|)}\, u$. Also $v$ belongs to 
$[\Cspt^1(\Om)]^m$  and, reasoning as in the proof of \cite[Lemma 6]{CM2022}, we find
the identity 
\begin{gather*}
\Re \lan \A^{hk} \de_{k} u, \de_{h}(\vf(|u|)\, u)\ran =\Re \big(
\lan \A^{hk} \de_{k} v, \de_{h} v\ran \\
+ \La(|v|) |v|^{-2} 
\lan (\A^{hk}-(\A^{kh})^{*})v, \de_{h} v\ran \Re \lan v, \de_{k} v\ran \\
	 -\La^{2}(|v|)|v|^{-4} \lan \A^{hk}v, v\ran \, \Re \lan v, \de_{h} v\ran
	 \Re \lan v, \de_{k} v\ran \big)
\end{gather*}
on the set $\{x\in \Om\ |\ u(x)\neq 0\}=\{x\in \Om\ |\ v(x)\neq 0\}$.
Inequality \eqref{eq:cond1} implies \eqref{eq:defdiss0} and the sufficiency is proved
when $r\geq 0$.

If $-1<r<0$, 
recalling Remark \ref{rem:1}, we can say that operator \eqref{eq:A} is $L^{\Phi}$-dissipative
if and only if \eqref{eq:rewrite} holds for any $w\in [\Cspt^{1}(\Om)]^{m}$.
 Therefore  what we have already proved for $r\geq 0$ shows that \eqref{eq:rewrite}  holds 
 if 
 \begin{equation}\label{eq:condvstar}
\begin{gathered}
\Re \int_{\Om} \Big(\lan (\A^{kh})^{*} \de_k	 v, \de_h v\ran 
+\widetilde{\La}(|v|)\, |v|^{-2} \lan \left((\A^{kh})^{*}- \A^{hk}\right)v, \de_h v \ran \Re \lan v, \de_{k} v\ran  \\
-\widetilde{\La}^{2}(|v|)\, |v|^{-4} \lan (\A^{kh})^{*} v,v\ran \Re \lan v, \de_{k} v\ran \Re \lan v, \de_{h} v\ran
\Big) dx  \geq 0
\end{gathered}
\end{equation}
for any $v \in [\Cspt^{1}(\Om)]^m$, where $\widetilde{\La}$  is defined by
the relation 
$$\widetilde{\La}\left(s\sqrt{\psi(s)}\right)= - \frac{s\, \psi'(s)}{s\,\psi'(s)+2\, 
\psi(s)}\, .
$$
Since $\widetilde{\Lambda}(|v|)=-\Lambda(|v|)$ (see \cite[Lemma 2]{CM2021}), condition \eqref{eq:condvstar}
coincides with \eqref{eq:cond1} and the sufficiency is proved also for $-1<r<0$.

{\it Necessity.} 
If $r\geq 0$ we can repeat the first part of the proof of Necessity in  \cite[Lemma 6]{CM2022}
to show that \eqref{eq:defdiss0} implies \eqref{eq:cond1} for any $v \in [\Cspt^{1}(\Om)]^m$.
If $r<0$ we rewrite \eqref{eq:defdiss0} as \eqref{eq:rewrite}. As before, what we have proved
for $r\geq 0$ shows that \eqref{eq:rewrite} implies \eqref{eq:condvstar} for any $v \in [\Cspt^{1}(\Om)]^m$ and this concludes the proof. 
\end{proof}

In the same way (see also \cite[Lemma 7]{CM2022}) one can prove the next result
\begin{lemma}\label{le:lemma2}
Let $\Om$ be  a domain in $\R^n$. The operator \eqref{eq:A} is strict $L^{\Phi}$-dissipative if and only if
there exists $\kappa>0$ such that
\begin{equation}\label{eq:cond1str}
\begin{gathered}
\Re \int_{\Om} \Big(\lan \A^{hk} \de_k	 v, \de_h v\ran 
+\La(|v|)\, |v|^{-2} \lan \left(\A^{hk}- (\A^{kh})^*\right)v, \de_h v \ran \Re \lan v, \de_{k} v\ran  \\
- \La^{2}(|v|)\, |v|^{-4} \lan \A^{hk} v,v\ran \Re \lan v, \de_{k} v\ran \Re \lan v, \de_{h} v\ran
\Big) dx  \geq \kappa \int_{\Om} |\nabla v|^{2}dx
\end{gathered}
\end{equation}
for any $v \in [\Cspt^{1}(\Om)]^m$. 
\end{lemma}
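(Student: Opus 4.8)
The plan is to follow the proof of Lemma~\ref{le:lemma1} almost verbatim, the only new ingredient being that one must keep track of the right-hand side rather than merely the sign of the integral. The pivotal observation is that, under the substitution $v=\sqrt{\vf(|u|)}\,u$ used throughout the proof of Lemma~\ref{le:lemma1}, the quadratic expression on the right of \eqref{eq:defdisstrict} transforms \emph{exactly} into the one appearing on the right of \eqref{eq:cond1str}: indeed $v=\sqrt{\vf(|u|)}\,u$ gives $\int_{\Om}|\nabla(\sqrt{\vf(|u|)}\,u)|^2\,dx=\int_{\Om}|\nabla v|^2\,dx$, and the same constant $\kappa$ is carried along.

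For the sufficiency with $r\geq 0$, I would take $u\in[\Cspt^1(\Om)]^m$, set $v=\sqrt{\vf(|u|)}\,u\in[\Cspt^1(\Om)]^m$, and invoke the pointwise identity established in the proof of Lemma~\ref{le:lemma1} on the set where $u$ does not vanish. Integrating this identity and applying \eqref{eq:cond1str} yields
$$
\Re\int_{\Om}\lan\A^{hk}\de_k u,\de_h(\vf(|u|)\,u)\ran\,dx\geq\kappa\int_{\Om}|\nabla v|^2\,dx=\kappa\int_{\Om}|\nabla(\sqrt{\vf(|u|)}\,u)|^2\,dx,
$$
which is precisely \eqref{eq:defdisstrict}. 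For $-1<r<0$ I would appeal to Remark~\ref{rem:1}, writing strict dissipativity in the rewritten form \eqref{eq:rewrite} with the same $\kappa$; since $\sqrt{\psi(|w|)}\,w=\sqrt{\vf(|u|)}\,u$ by \cite[formula (43)]{CM2021}, the right-hand side is unchanged, and the already-proved case applied to $(\A^{kh})^{*}$ and $\psi$ produces the starred condition \eqref{eq:condvstar} with right-hand side $\kappa\int_{\Om}|\nabla v|^2\,dx$. Using $\widetilde{\La}(|v|)=-\La(|v|)$ this coincides with \eqref{eq:cond1str}.

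For the necessity I would reverse the substitution: given an arbitrary $v\in[\Cspt^1(\Om)]^m$, recover $u$ from $v=\sqrt{\vf(|u|)}\,u$, which defines an invertible correspondence $u\leftrightarrow v$ exactly as in the proof of Lemma~\ref{le:lemma1} (and preserves membership in $[\Cspt^1(\Om)]^m$ when $r\geq 0$). Feeding this $u$ into \eqref{eq:defdisstrict} and again combining the pointwise identity with the equality of the two right-hand sides delivers \eqref{eq:cond1str}. The regime $-1<r<0$ is handled once more through \eqref{eq:rewrite} and \eqref{eq:condvstar}.

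The algebraic computations being identical to those of Lemma~\ref{le:lemma1}, I expect no genuine obstacle here; the single point requiring care is the bookkeeping of the right-hand side under the nonlinear change of variable $u\leftrightarrow v$, and in particular the verification that the constant $\kappa$ survives the passage to the rewritten form \eqref{eq:rewrite} in the regime $-1<r<0$. This is exactly what the identity $\sqrt{\psi(|w|)}\,w=\sqrt{\vf(|u|)}\,u$ guarantees, so the argument goes through as stated, in parallel with \cite[Lemma 7]{CM2022}.
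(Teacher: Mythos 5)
Your proposal is correct and follows precisely the route the paper intends: the paper gives no separate argument for Lemma \ref{le:lemma2}, stating only that it is proved "in the same way" as Lemma \ref{le:lemma1} (cf.\ \cite[Lemma 7]{CM2022}), which is exactly your strategy of rerunning that proof while tracking the right-hand side via the identities $|\nabla(\sqrt{\vf(|u|)}\,u)|^{2}=|\nabla v|^{2}$ and $\sqrt{\psi(|w|)}\,w=\sqrt{\vf(|u|)}\,u$. No gaps.
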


 \section{Functional dissipativity for systems of ordinary differential equations}
 \label{sec:ODE}

In this Section we are going to consider the operator $A$ defined as
\begin{equation} \label{eq:Aord}
    A u = (\A(x)u')'
\end{equation}
where $\A(x)=\{a_{ij}(x)\}$ ($i,j=1,\ldots,m$) is a matrix with 
complex locally integrable entries defined in the bounded or 
unbounded
interval $(a,b)\subset \R$.

 \begin{lemma}\label{lem:1}
The operator $A$ is $L^{\Phi}$-dissipative if and only if
\begin{equation}\label{eq:condord1}
\begin{gathered}
  \int_{a}^{b}\Big( \Re \lan \A v',v'\ran-\La^{2}(|v|)\, |v|^{-4} \Re \lan \A v,v\ran 
	  (\Re \lan v,v'\ran)^{2} \cr
	  +\La(|v|)\, |v|^{-2} \Re (\lan \A v,v'\ran 
	  -\lan \A v',v\ran) \Re \lan v,v'\ran
	  \Big)\, dx  \geq 0,
\end{gathered}
\end{equation}
for any $v\in [\Cspt^1((a,b))]^m$.
\end{lemma}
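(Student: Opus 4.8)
The plan is to obtain \eqref{eq:condord1} as the one-dimensional specialization of Lemma \ref{le:lemma1}. The operator \eqref{eq:Aord} is precisely the operator \eqref{eq:A} in the case $n=1$, $\Om=(a,b)$, with the single coefficient matrix $\A^{11}=\A$; here the indices $h,k$ range over the single value $1$, and $\de_1 v = v'$. I would therefore invoke Lemma \ref{le:lemma1}, which asserts that \eqref{eq:Aord} is $L^{\Phi}$-dissipative if and only if \eqref{eq:cond1} holds for every $v\in[\Cspt^1((a,b))]^m$, and the task reduces to rewriting \eqref{eq:cond1} in the present one-dimensional setting.

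Substituting $h=k=1$ into \eqref{eq:cond1} leaves three summands: $\lan \A v', v'\ran$; the term $\La(|v|)\,|v|^{-2}\lan(\A-\A^*)v,v'\ran\,\Re\lan v,v'\ran$; and $-\La^2(|v|)\,|v|^{-4}\lan \A v,v\ran(\Re\lan v,v'\ran)^2$. Since the scalar factors $\La(|v|)$, $|v|^{-2}$, $|v|^{-4}$ and $\Re\lan v,v'\ran$ are all real, the real part in \eqref{eq:cond1} can be moved term by term onto the bilinear quantities $\lan \A v',v'\ran$, $\lan \A v,v\ran$ and $\lan(\A-\A^*)v,v'\ran$. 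This immediately reproduces the first and third summands of \eqref{eq:condord1}.

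The only point demanding a short computation is the middle term. Using the definition of the adjoint, $\lan \A^* v,v'\ran=\lan v,\A v'\ran$, together with $\Re\overline{z}=\Re z$, I would write $\Re\lan \A^* v,v'\ran=\Re\lan v,\A v'\ran=\Re\lan \A v',v\ran$, so that
\[
\Re\lan(\A-\A^*)v,v'\ran=\Re\lan \A v,v'\ran-\Re\lan \A v',v\ran=\Re\big(\lan \A v,v'\ran-\lan \A v',v\ran\big),
\]
which is exactly the bilinear factor appearing in \eqref{eq:condord1}. Collecting the three contributions turns \eqref{eq:cond1} into \eqref{eq:condord1}, and the stated equivalence follows at once from Lemma \ref{le:lemma1}.

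Since the whole argument is a specialization of an already-established equivalence, I do not expect a genuine obstacle; the only care required is the bookkeeping of the real parts and the correct use of the adjoint relation to cast the anti-Hermitian contribution into the form $\lan \A v,v'\ran-\lan \A v',v\ran$. One should also retain the convention, inherited from Lemma \ref{le:lemma1}, that the integrand is extended by zero on the set where $v$ vanishes.
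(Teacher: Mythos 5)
Your proposal is correct and follows exactly the paper's route: the paper's entire proof of this lemma is the one-line observation that it is a particular case of Lemma \ref{le:lemma1}, and your specialization to $n=1$ (including the correct handling of the real parts and the adjoint identity $\Re\lan \A^{*}v,v'\ran=\Re\lan \A v',v\ran$) simply makes that reduction explicit.
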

\begin{proof}
This is just a particular case of Lemma \ref{le:lemma1}.
\end{proof}

As in \cite{CM2022}, from now on we require also the following condition on the function $\vf$:
\begin{enumerate}\setcounter{enumi}{5}
\item  the function
$$
|s\, \vf'(s)/ \vf(s)|
$$
is not decreasing.
\end{enumerate}

This condition implies that the function $\La^{2}(t)$ is not decreasing on $(0,+\infty)$
(see \cite[Lemma 8]{CM2022}).

 The next Theorem provides an algebraic necessary and sufficient
 condition for the $L^{\Phi}$-dissipative of operator \eqref{eq:Aord}.
 
 \begin{theorem}\label{th:1}
    The operator $A$ is $L^{\Phi}$-dissipative if and only if
    \begin{equation}\label{eq:condode}
    \begin{gathered}
  \Re \lan \A(x) \la,\la\ran - \La_{\infty}^{2}\Re\lan \A(x)\om,\om\ran (\Re \lan\la,\om\ran)^{2}
	\cr
	+
	\La_{\infty}\Re(\lan \A(x)\om,\la\ran -\lan \A(x)\la,\om\ran)
	\Re \lan \la,\om\ran   \geq 0
    \end{gathered}
\end{equation}
    for almost every $x\in(a,b)$ and for any 
    $\la,\om\in\C^{m}$, $|\om|=1$.
\end{theorem}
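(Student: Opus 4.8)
The plan is to establish the equivalence between the integral inequality \eqref{eq:condord1} of Lemma \ref{lem:1} and the algebraic inequality \eqref{eq:condode}. Throughout I denote by $Q(t,\la,\om)$ the left-hand side of \eqref{eq:condode} with the constant $\La_{\infty}$ replaced by a real parameter $t$, i.e.
\[
Q(t,\la,\om)=\Re\lan\A(x)\la,\la\ran + t\,\Re\lan\la,\om\ran\,\Re(\lan\A(x)\om,\la\ran-\lan\A(x)\la,\om\ran) - t^{2}(\Re\lan\la,\om\ran)^{2}\Re\lan\A(x)\om,\om\ran,
\]
so that \eqref{eq:condode} reads $Q(\La_{\infty},\la,\om)\geq0$. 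The structural facts I will use are that, by condition (vi) and the subsequent remark, $\La^{2}$ is non-decreasing on $(0,+\infty)$; since $\La$ is continuous and its square is monotone, $\La(s)$ keeps the sign of $\La_{\infty}$ with $|\La(s)|\leq|\La_{\infty}|$ for every $s$, and moreover $1-\La^{2}>0$ everywhere.

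For the sufficiency I would start from \eqref{eq:condode} and, at each point where $v\neq0$, substitute $\om=v/|v|$ and $\la=v'$ into the integrand of \eqref{eq:condord1}. Using $\Re\lan v,v'\ran=|v|\,\Re\lan\la,\om\ran$, $\lan\A v,v\ran=|v|^{2}\lan\A\om,\om\ran$, and the bilinearity of the remaining brackets, a direct computation shows that this integrand equals exactly $Q(\La(|v|),v',v/|v|)$. It then remains to prove $Q(\La(|v|),\la,\om)\geq0$. First, for a given $\la$ I choose a unit vector $\om$ real-orthogonal to $\la$, which is possible because the real-orthogonal complement of $\la$ in $\C^{m}\cong\R^{2m}$ has dimension $2m-1\geq1$; then $\Re\lan\la,\om\ran=0$, so \eqref{eq:condode} reduces to $\Re\lan\A(x)\la,\la\ran\geq0$, valid for every $\la$. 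Consequently the coefficient $-(\Re\lan\la,\om\ran)^{2}\Re\lan\A\om,\om\ran$ of $t^{2}$ is $\leq0$, so $t\mapsto Q(t,\la,\om)$ is concave; being non-negative at $t=0$ (just shown) and at $t=\La_{\infty}$ (this is \eqref{eq:condode}), it is non-negative on the whole segment joining $0$ and $\La_{\infty}$. As $\La(|v|)$ lies in that segment, $Q(\La(|v|),\la,\om)\geq0$ pointwise, and integrating yields \eqref{eq:condord1}, hence $L^{\Phi}$-dissipativity by Lemma \ref{lem:1}.

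For the necessity I would argue by contradiction and localisation. Suppose \eqref{eq:condode} fails on a set of positive measure: there exist a Lebesgue point $x_{0}$ of $\A$ and vectors $\la,\om$ with $|\om|=1$ and $Q(\La_{\infty},\la,\om)<0$; by continuity and $\La(s)\to\La_{\infty}$ one has $Q(\La(R),\la,\om)<0$ for all $R$ large. I then test \eqref{eq:condord1} on functions concentrated near $x_{0}$. After the change of variables $x=x_{0}+h\,y$ the integrand is homogeneous of degree $h^{-2}$ in the derivatives, so the integral equals $h^{-1}\int\tilde F\,dy$ with $\tilde F$ built from a fixed profile and from $\A(x_{0}+hy)$; since $x_{0}$ is a Lebesgue point and the profile is fixed, letting $h\to0$ freezes $\A$ to the constant $\A(x_{0})$, and the sign of the integral equals that of the frozen functional. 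For this constant-coefficient problem I design the profile to reach a plateau height $\ro\simeq R$ (so that $\La(|v|)\simeq\La(R)$, which gives access to $\La_{\infty}$ as $R\to\infty$) and to carry $v'\simeq\la$ over a long realisation interval, on which the integrand is close to $Q(\La(R),\la,\om)<0$; in the two transition layers where $\ro$ grows from $0$ to $\simeq R$ and back I keep $v/|v|$ equal to $\om$, so their contribution is controlled by $\int(\ro')^{2}$ and is made negligible relative to the negative bulk by spreading each transition over a wide interval. The resulting negative value of \eqref{eq:condord1} contradicts dissipativity, which proves \eqref{eq:condode} for almost every $x_{0}$.

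The main obstacle is exactly this construction: one must simultaneously send the height $R\to\infty$ to realise the extremal value $\La_{\infty}$, freeze the merely $L^{1}_{\mathrm{loc}}$ coefficients at $x_{0}$, and prevent the energy $\sim R^{2}$ needed to raise $|v|$ from $0$ to $R$ from overwhelming the bulk contribution $\sim R\,Q(\La(R),\la,\om)$. Balancing these competing scales — by amortising the fixed transition cost over a realisation interval much longer than $R$, which is legitimate only after the coefficients have been frozen to constants — is the delicate point; by contrast the special case $\la\parallel\om$, equivalent to $\Re\lan\A(x)\om,\om\ran\geq0$, is obtained at once from purely radial test functions.
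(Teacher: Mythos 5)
Your argument is correct and follows essentially the same route as the paper: sufficiency by showing that the inequality at $t=\La_{\infty}$, together with $\Re\lan \A\la,\la\ran\geq 0$ and the fact that $\La$ has constant sign with $\La^{2}$ nondecreasing, forces the quadratic in $\La(t)$ to be nonnegative for every $t$ (the paper locates $\La(t)$ between the roots of that quadratic where you invoke concavity and the two endpoints $0$ and $\La_{\infty}$ --- an equivalent device); necessity by freezing the coefficients at Lebesgue points via the scaling $v=\psi((x-x_{0})/\ep)$ and then testing the resulting constant-coefficient inequality with profiles of large modulus in the direction $\om$ perturbed by $\la$, so that $\La(|v|)\to\La_{\infty}$. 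Your plateau/transition-layer bookkeeping is just a reparametrisation of the paper's test function $\h(x/R)\bigl(\mu\om+x^{2}(3-2x)\la\bigr)$ with $\mu,R\to\infty$.
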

\begin{proof}
 \textit{Necessity.}
   Assume for the moment that
   the coefficients $a_{ij}$ are constant and  $(a,b)=\R$.

    Let us fix $\la$ and $\om$ in $\C^{m}$, with $|\om|=1$, and choose
    $v(x)=\h(x/R)\, w(x)$ where
    $$
    w_{j}(x)=\begin{cases}
    \mu\,\om_{j} & \text{if $x<0$'}\cr
    \mu\,\om_{j}+x^{2}(3-2x)\la_{j}   & \text{if $0\leq x\leq 1$,}\cr
    \mu\,\om_{j}+\la_{j}    & \text{if $x>1$},
    \end{cases}
    $$
    where $\mu,R\in\R^{+}$, $\h\in\Cspt^{\infty}(\R)$, $\spt\h\subset 
    [-1,1]$ and $\h(x)=1$ if $|x|\leq 1/2$.

Put this $v$ in \eqref{eq:condord1}. By repeating the arguments used in \cite[pp.250--251]{CM2006}
and observing that
$$
\La(|w(x)|)=
\begin{cases}
\La(\mu)  &  \text{if $x<0$,}\\
 \La(|\mu\om + x^2(3-2x)\la|) &  \text{if $0\leq x\leq 1$,}\\
 \La(|\mu\om + \la| & \text{if $x>1$},
\end{cases}
$$
implies $\lim_{\mu\to \infty}\La(|w(x)|)=\La_{\infty}$, we find 
 \begin{gather*}
36 \int_{0}^{1}\Big( \Re \lan \A \la,\la\ran - \La_{\infty}^{2}\Re\lan \A\om,\om\ran (\Re \lan\la,\om\ran)^{2}
	\cr
	+
	\La_{\infty}\Re(\lan \A\om,\la\ran -\lan \A\la,\om\ran)
	\Re \lan \la,\om\ran
		     \Big)\, x^{2}(1-x)^{2} dx  \geq 0
		     \end{gather*}
    and \eqref{eq:condode} is proved.

If $\{a_{hk}\}$ are defined on $(a,b)$ and are not necessarily constant, consider 
$$
v(x)=\psi((x-x_{0})/\ep)
$$
where $x_{0}$ is a fixed point in $(a,b)$, $\psi\in [\Cspt^{1}((-1,1))]^{m}$
and $0<\ep <  \min\{x_{0}-a,b-x_{0}\}$.
Putting $v$ in \eqref{eq:condord1} we get
\begin{gather*}
 \int_{\R}\Big(\Re \lan \A (x_0+\ep y)\p',\p'\ran 
	   -\La_{\infty}^{2}|\p|^{-4}\Re\lan \A (x_0+\ep y)\psi,\psi\ran (\Re \lan 
	   \psi,\p'\ran)^{2} \cr
	   + \La_{\infty} |\p|^{-2}\Re(\lan \A (x_0+\ep y)\psi,\p'\ran
   -\lan \A (x_0+\ep y)\p',\psi\ran) \Re \lan \psi,\p'\ran
	   \Big)\, dy  \geq 0.
	   \end{gather*}

 Letting $\ep\to 0^{+}$ we find for almost every $x_{0}$
	   \begin{gather*}
	   		   \int_{\R}\Big(\Re \lan \A (x_0)\p',\p'\ran 
		   -\La_{\infty}^{2}|\p|^{-4}\Re\lan \A (x_0)\psi,\psi\ran (\Re \lan 
		   \psi,\p'\ran)^{2} \cr
		   + \La_{\infty} |\p|^{-2}\Re(\lan \A (x_0)\psi,\p'\ran
	   -\lan \A (x_0)\p',\psi\ran) \Re \lan \psi,\p'\ran
		   \Big)\, dy  \geq 0.   
		  \end{gather*}

What we have obtained 
   for constant 
    coefficients gives the result.

     \textit{Sufficiency.}
     We start by observing that    \eqref{eq:condode}  implies
        \begin{equation}\label{eq:posit}
 \Re \lan \A(x) \la,\la\ran \geq 0
\end{equation}
for almost every $x\in(a,b)$ and for any 
    $\la\in\C^{m}$. 
    Indeed, let us take $x\in(a,b)$ such that
     \eqref{eq:condode} holds and $\la\in\C^{m}$; 
     we can choose $\om\in\C^{m}$, $|\om|=1$, such that $\lan\la,\om\ran=0$,
     and \eqref{eq:posit} is proved. 
  
     Let us fix $x\in(a,b)$ and
    $\la,\om\in\C^{m}$, $|\om|=1$, such that \eqref{eq:condode} holds.
 We can write this condition as
\begin{equation}\label{eq:condabc}
\al \La_{\infty}^{2} - \be \La_{\infty} - \ga \leq 0\, ,
\end{equation} 
 where
\begin{gather*}
 \al = \Re\lan \A(x)\om,\om\ran (\Re \lan\la,\om\ran)^{2}, \quad
 \ga = \Re \lan \A(x) \la,\la\ran, \\
 \be = \Re(\lan \A(x)\om,\la\ran -\lan \A(x)\la,\om\ran)\Re \lan\la,\om\ran.
\end{gather*}

Note that, in view of  \eqref{eq:posit}, $\al$ and $\ga$ are non-negative.
Then, if $\al>0$, we have $\La_1\, \La_2 \leq 0$, where $\La_1$ and $\La_2$
are the roots of the equation 
$\al \La^{2} - \be \La - \ga = 0$.
 We claim that from \eqref{eq:condabc} 
it follows
\begin{equation}\label{eq:condabct}
\al \La^{2}(t) - \be \La(t) - \ga \leq 0
\end{equation}
for any $t\geq 0$.
Indeed, suppose $\al>0$ and  $\La_1\leq 0 \leq \La_2$. Inequality \eqref{eq:condabc} means
\begin{equation*}
%\label{eq:condL1}
\La_1\leq  \La_{\infty} \leq \La_2\, .
\end{equation*}
We recall that the function $\La(t)$ is monotone and does not change sign (see \cite[Lemma 8]{CM2022}).
If $\La(t)\geq 0$, we have
$$
0\leq \La(t) \leq \La_{\infty} \leq \La_2,
$$
while if $\La(t)\leq 0$,
$$
\La_1 \leq \La_{\infty}\leq \La(t) \leq 0.
$$
In any case, we find $\La_1 \leq \La(t) \leq \La_2$ for any $t\geq 0$ and
\eqref{eq:condabct} is proved under the assumption  $\al>0$.

If  $\al=0$ and $\be=0$ \eqref{eq:condabct} is trivial. If $\al=0$ and $\be\neq 0$,
\eqref{eq:condabc} becomes $\be \La_{\infty} + \ga \geq 0$. By using  again the fact that
$\La(t)$ does not change sign, we get $\be \La(t) + \ga \geq 0$ for any $t\geq 0$.
We have then  proved that
 \begin{equation}\label{eq:condodet}
    \begin{gathered}
  \Re \lan \A(x) \la,\la\ran -\La^{2}(t)\Re\lan \A(x)\om,\om\ran (\Re \lan\la,\om\ran)^{2}
	\cr
	+
	\La(t)\Re(\lan \A(x)\om,\la\ran -\lan \A(x)\la,\om\ran)
	\Re \lan \la,\om\ran   \geq 0
    \end{gathered}
\end{equation}
 for almost every $x\in(a,b)$ and for any $t > 0$, 
    $\la,\om\in\C^{m}$, $|\om|=1$.
    This shows that the integrand in \eqref{eq:condord1} is non-negative almost everywhere 
    and Lemma \ref{lem:1} gives the result.
\end{proof}

 \begin{remark}\label{rem:equiv}
  In the proof  of the Sufficiency we have shown that - assuming condition \eqref{eq:condL} -
  inequality  \eqref{eq:condode} implies that \eqref{eq:condodet} holds for any $t>0$. The viceversa being obvious, we have that \eqref{eq:condodet} for any $t>0$ and \eqref{eq:condode}  are equivalent.
\end{remark}

\begin{corollary}\label{cor:1}
    If the operator $A$ is $L^{\Phi}$-dissipative, then
    \eqref{eq:posit} holds  for almost every $x\in(a,b)$ and for any 
    $\la\in\C^{m}$.
\end{corollary}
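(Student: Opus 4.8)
The plan is to deduce the claim directly from Theorem \ref{th:1}, which is already at our disposal. Since $A$ is assumed to be $L^{\Phi}$-dissipative, Theorem \ref{th:1} guarantees that inequality \eqref{eq:condode} holds for almost every $x\in(a,b)$ and for every pair $\la,\om\in\C^m$ with $|\om|=1$. It therefore suffices to show that \eqref{eq:condode}, taken by itself, forces \eqref{eq:posit}; the whole content of the corollary is this implication.

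To carry this out I would fix a point $x\in(a,b)$ at which \eqref{eq:condode} is valid, together with an arbitrary vector $\la\in\C^m$. If $\la=0$ then \eqref{eq:posit} is trivial, so I may assume $\la\neq 0$. The key structural observation is that the only two terms in \eqref{eq:condode} that can be negative both carry the scalar factor $\Re\lan\la,\om\ran$: the term with $\La_{\infty}^{2}$ carries it quadratically through $(\Re\lan\la,\om\ran)^{2}$, and the term with $\La_{\infty}$ carries it linearly. The strategy is thus to choose the free unit vector $\om$ so that this factor vanishes.

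Concretely, I would select $\om\in\C^m$ with $|\om|=1$ and $\lan\la,\om\ran=0$; such an $\om$ exists because the orthogonal complement of $\la$ is a nontrivial subspace of $\C^m$. With this choice $\Re\lan\la,\om\ran=0$, so both the $\La_{\infty}^{2}$-term and the $\La_{\infty}$-term drop out, and \eqref{eq:condode} collapses to exactly $\Re\lan\A(x)\la,\la\ran\geq 0$, which is \eqref{eq:posit}. Letting $x$ range over the full-measure set on which \eqref{eq:condode} holds and letting $\la$ be arbitrary then yields the statement.

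I do not expect any real obstacle: this is precisely the preliminary remark made at the very start of the Sufficiency part of the proof of Theorem \ref{th:1}, so the corollary is essentially an isolated restatement of that step. The only point deserving a word of care is the existence of the orthogonal unit vector $\om$, which relies on $\C^m$ having dimension at least two, as is implicit in the system setting treated here.
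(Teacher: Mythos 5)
Your proof is correct and is essentially identical to the paper's: the authors also invoke the necessity part of Theorem \ref{th:1} and then pick a unit vector $\om$ with $\lan\la,\om\ran=0$ so that the two $\La_{\infty}$-terms in \eqref{eq:condode} vanish, leaving \eqref{eq:posit}. (Your closing caveat about $m\geq 2$ could even be dispensed with, since only $\Re\lan\la,\om\ran=0$ is needed and $\om=i\la/|\la|$ works when $m=1$, but this does not affect the argument.)
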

\begin{proof}
In the proof of Theorem \ref{th:1} we have already seen that
inequality \eqref{eq:condode} implies  \eqref{eq:posit}. 
\end{proof} 
 
 We have also

 \begin{lemma}
The operator $A$ is strict $L^{\Phi}$-dissipative if and only if
there exists $\kappa >0$ such that
\begin{gather*}
  \int_{a}^{b}\Big( \Re \lan \A v',v'\ran-\La^{2}(|v|)\, |v|^{-4} \Re \lan \A v,v\ran 
	  (\Re \lan v,v'\ran)^{2} \cr
	  +\La(|v|)\, |v|^{-2} \Re (\lan \A v,v'\ran 
	  -\lan \A v',v\ran) \Re \lan v,v'\ran
	  \Big)\, dx  \geq \kappa  \int_{a}^{b} |v'|^{2}dx,
\end{gather*}
for any $v\in [\Cspt^1(\Om)]^m$.
\end{lemma}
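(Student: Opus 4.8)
The plan is to read this statement as the strict-dissipativity analogue of Lemma \ref{lem:1}, and to derive it from Lemma \ref{le:lemma2} in exactly the way Lemma \ref{lem:1} was derived from Lemma \ref{le:lemma1}. Indeed, operator \eqref{eq:Aord} is nothing but operator \eqref{eq:A} in the one-dimensional case, with $n=1$, $\Om=(a,b)$, the only index choice $h=k=1$, and $\A^{11}=\A$. I would therefore specialize the criterion \eqref{eq:cond1str} of Lemma \ref{le:lemma2} to this setting and check that it reproduces the asserted inequality verbatim.

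First I would substitute $\de_1 v = v'$ and $\nabla v = v'$ into \eqref{eq:cond1str}. The leading term $\lan \A^{hk}\de_k v,\de_h v\ran$ becomes $\lan \A v',v'\ran$; the quadratic-in-$\La$ term $-\La^{2}(|v|)|v|^{-4}\lan \A^{hk}v,v\ran \Re\lan v,\de_k v\ran\Re\lan v,\de_h v\ran$ becomes $-\La^{2}(|v|)|v|^{-4}\Re\lan \A v,v\ran(\Re\lan v,v'\ran)^{2}$; and the right-hand side $\kappa\int_\Om|\nabla v|^{2}\,dx$ becomes $\kappa\int_a^b|v'|^{2}\,dx$. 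These already match the corresponding pieces of the stated inequality.

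The only step needing a short verification is the cross term. In \eqref{eq:cond1str} it appears as $\La(|v|)|v|^{-2}\lan(\A^{11}-(\A^{11})^{*})v,\de_1 v\ran\Re\lan v,\de_1 v\ran$, and I would use the adjoint identity $\Re\lan \A^{*} v,v'\ran=\Re\lan v,\A v'\ran=\Re\lan \A v',v\ran$ to rewrite $\Re\lan(\A-\A^{*})v,v'\ran=\Re(\lan \A v,v'\ran-\lan \A v',v\ran)$, which is exactly the cross term in the statement. This is the same manipulation already implicit in passing from the general criterion \eqref{eq:cond1} to its one-dimensional form \eqref{eq:condord1}. Once this bookkeeping is done, the integrand of \eqref{eq:cond1str} coincides term by term with the integrand of the stated inequality, so Lemma \ref{le:lemma2} yields the equivalence. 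I expect no genuine obstacle: the whole content is the dimensional reduction $n=1$, the single point of care being the adjoint in the antisymmetric cross term.
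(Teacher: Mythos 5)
Your proposal is correct and coincides with the paper's own argument: the paper proves this lemma by simply observing that it is the particular case $n=1$, $h=k=1$, $\A^{11}=\A$ of Lemma \ref{le:lemma2}, exactly as you do. Your verification of the cross term via $\Re\lan(\A-\A^{*})v,v'\ran=\Re(\lan \A v,v'\ran-\lan \A v',v\ran)$ is the only bookkeeping needed, and it is sound.
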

 \begin{proof}
 It is a particular case of Lemma \ref{le:lemma2}. 
 \end{proof}

 \begin{corollary}\label{cor:2}
    Suppose
    \begin{equation}\label{eq:condL}
\La^{2}_{\infty}=\sup_{t>0}\La^{2}(t) < 1.
\end{equation}
The operator $A$ is strict $L^{\Phi}$-dissipative if and only if
there exists $\kappa >0$ such that $A-\kappa I (d^{2}/dx^{2})$ is
$L^{\Phi}$-dissipative.
\end{corollary}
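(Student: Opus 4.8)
The plan is to reduce both halves of the equivalence to inequalities for one and the same quadratic form, and then to compare the two right-hand sides using the hypothesis \eqref{eq:condL}. Since the operator $A-\kappa I(d^{2}/dx^{2})$ has coefficient matrix $\A(x)-\kappa I$, the first step is to feed this matrix into the $L^{\Phi}$-dissipativity criterion of Lemma \ref{lem:1}. Writing $Q(v)$ for the integral on the left-hand side of \eqref{eq:condord1} (with the zero-extension convention on $\{v=0\}$), I would compute how the integrand changes when $\A$ is replaced by $\A-\kappa I$.

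The key computation is elementary. The term $\Re\lan(\A-\kappa I)v',v'\ran$ contributes the extra summand $-\kappa|v'|^{2}$; the term containing $\Re\lan(\A-\kappa I)v,v\ran$ contributes $+\kappa\,\La^{2}(|v|)\,|v|^{-2}(\Re\lan v,v'\ran)^{2}$; and the antisymmetric middle term is unchanged, because $\lan v',v\ran=\overline{\lan v,v'\ran}$ forces $\lan v,v'\ran-\lan v',v\ran$ to be purely imaginary, so its real part vanishes. Hence, by Lemma \ref{lem:1}, $A-\kappa I(d^{2}/dx^{2})$ is $L^{\Phi}$-dissipative if and only if
$$
Q(v)\ \geq\ \kappa\int_{a}^{b}\Big(|v'|^{2}-\La^{2}(|v|)\,|v|^{-2}(\Re\lan v,v'\ran)^{2}\Big)\,dx\ =:\ \kappa\,N(v)
$$
for every $v\in[\Cspt^{1}((a,b))]^{m}$, whereas by the characterization of strict $L^{\Phi}$-dissipativity proved just above, $A$ is strict $L^{\Phi}$-dissipative if and only if there is $\kappa_{0}>0$ with $Q(v)\geq\kappa_{0}\int_{a}^{b}|v'|^{2}\,dx$ for every such $v$.

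The heart of the matter is then the two-sided estimate $(1-\La_{\infty}^{2})\int_{a}^{b}|v'|^{2}\,dx\leq N(v)\leq\int_{a}^{b}|v'|^{2}\,dx$. The upper bound is immediate since the subtracted integrand is non-negative. For the lower bound, Cauchy--Schwarz gives $(\Re\lan v,v'\ran)^{2}\leq|\lan v,v'\ran|^{2}\leq|v|^{2}|v'|^{2}$ on $\{v\neq 0\}$, so the subtracted integrand is at most $\La^{2}(|v|)|v'|^{2}\leq\La_{\infty}^{2}|v'|^{2}$, while on $\{v=0\}$ it is zero by convention; integrating yields $N(v)\geq(1-\La_{\infty}^{2})\int_{a}^{b}|v'|^{2}\,dx$. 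This is the only place where hypothesis \eqref{eq:condL} enters, and it is precisely what keeps the modified form $N$ comparable to $\int_{a}^{b}|v'|^{2}\,dx$.

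The conclusion follows in both directions. If $A$ is strict $L^{\Phi}$-dissipative with constant $\kappa_{0}$, then $Q(v)\geq\kappa_{0}\int_{a}^{b}|v'|^{2}\,dx\geq\kappa_{0}N(v)$ by the upper bound, so $A-\kappa_{0}I(d^{2}/dx^{2})$ is $L^{\Phi}$-dissipative. Conversely, if $A-\kappa I(d^{2}/dx^{2})$ is $L^{\Phi}$-dissipative for some $\kappa>0$, then $Q(v)\geq\kappa N(v)\geq\kappa(1-\La_{\infty}^{2})\int_{a}^{b}|v'|^{2}\,dx$ by the lower bound, which is strict $L^{\Phi}$-dissipativity with constant $\kappa(1-\La_{\infty}^{2})>0$. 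I do not expect any genuine obstacle: the integrand computation and the Cauchy--Schwarz estimate are routine, and the sole role of the assumption $\La_{\infty}^{2}<1$ is to make the coercivity of $N$ over $\int_{a}^{b}|v'|^{2}\,dx$ available, without which the reverse implication would fail.
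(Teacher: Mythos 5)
Your argument is correct. The computation of the integrand for the shifted matrix $\A-\kappa I$ is right: the antisymmetric middle term of \eqref{eq:condord1} is indeed insensitive to the shift because $\lan v,v'\ran-\lan v',v\ran$ is purely imaginary, and the remaining two terms produce exactly $-\kappa\bigl(|v'|^{2}-\La^{2}(|v|)|v|^{-2}(\Re\lan v,v'\ran)^{2}\bigr)$, so Lemma \ref{lem:1} reduces the $L^{\Phi}$-dissipativity of $A-\kappa I(d^{2}/dx^{2})$ to $Q(v)\geq\kappa N(v)$, while the lemma immediately preceding the corollary reduces strict dissipativity of $A$ to $Q(v)\geq\kappa_{0}\int|v'|^{2}$. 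The two-sided bound $(1-\La_{\infty}^{2})\int|v'|^{2}\leq N(v)\leq\int|v'|^{2}$ via Cauchy--Schwarz then closes both implications. Where you differ from the paper: the paper disposes of this corollary by citing \cite[Corollary 1]{CM2022}, so your proof is a self-contained replacement for that external reference. It is worth noting that your mechanism is the integral-level twin of what the paper does explicitly in Theorem \ref{th:2}, where the same shift computation and the same coercivity inequality appear at the level of the algebraic symbol, namely $|\la|^{2}-\La_{\infty}^{2}(\Re\lan\la,\om\ran)^{2}\geq(1-\La_{\infty}^{2})|\la|^{2}$ in \eqref{eq:vecchia54}; your route avoids passing through the pointwise criterion of Theorem \ref{th:1} altogether and works directly with the quadratic functional, at the modest cost of having to invoke the zero-extension convention on $\{v=0\}$, which you handle correctly.
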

\begin{proof}
It is a particular case of 
 \cite[Corollary 1]{CM2022}.
\end{proof}

Following the ideas of \cite{CM2006} we prove
\begin{theorem}\label{th:2}
Let us assume condition \eqref{eq:condL}.
  There exists $\kappa >0$ such that $A-\kappa I (d^{2}/dx^{2})$ is
$L^{\Phi}$-dissipative if and only if
\begin{equation}\label{eq:infP}
\essinf_{\genfrac{}{}{0 pt}{}{(x,\la,\om) \in(a,b)\times \C^m\times \C^m}{|\la|=|\om|=1}}
P(x,\la,\om) > 0,
\end{equation}
where
\begin{gather*}
P(x,\la,\om) =  \Re \lan \A(x) \la,\la\ran - \La_{\infty}^{2}\Re\lan \A(x)\om,\om\ran (\Re \lan\la,\om\ran)^{2}
	\\
	+
	\La_{\infty}\Re(\lan \A(x)\om,\la\ran -\lan \A(x)\la,\om\ran)
	\Re \lan \la,\om\ran \, .
\end{gather*}
\end{theorem}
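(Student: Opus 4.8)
The plan is to reformulate the statement so that it becomes an inequality in a single scalar parameter $\kappa$ applied to a modified operator, and then to recognize that inequality as the pointwise (in $x,\la,\om$) version of the algebraic criterion from Theorem~\ref{th:1}. Concretely, I would start from the observation that the operator $A_\kappa:=A-\kappa I\,(d^2/dx^2)$ has coefficient matrix $\A(x)-\kappa I$, so by Theorem~\ref{th:1} applied to $A_\kappa$, the operator $A_\kappa$ is $L^\Phi$-dissipative if and only if
\begin{equation*}
\Re\lan(\A(x)-\kappa I)\la,\la\ran-\La_\infty^2\Re\lan(\A(x)-\kappa I)\om,\om\ran(\Re\lan\la,\om\ran)^2+\La_\infty\Re(\lan(\A(x)-\kappa I)\om,\la\ran-\lan(\A(x)-\kappa I)\la,\om\ran)\Re\lan\la,\om\ran\geq 0
\end{equation*}
for a.e.\ $x$ and all $\la,\om\in\C^m$ with $|\om|=1$. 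The antisymmetric middle term is unaffected by the $\kappa I$ subtraction since $\lan\om,\la\ran-\lan\la,\om\ran$ is purely imaginary after the extra real part, so the $\kappa$-contribution collapses to $-\kappa|\la|^2+\kappa\La_\infty^2|\om|^2(\Re\lan\la,\om\ran)^2$, which on $|\om|=1$ is $-\kappa(|\la|^2-\La_\infty^2(\Re\lan\la,\om\ran)^2)$.

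Next I would homogenize in $\la$: the displayed quadratic form is homogeneous of degree two in $\la$, so it suffices to test on $|\la|=1$, reducing the condition to $P(x,\la,\om)\geq\kappa(|\la|^2-\La_\infty^2(\Re\lan\la,\om\ran)^2)=\kappa(1-\La_\infty^2(\Re\lan\la,\om\ran)^2)$ for $|\la|=|\om|=1$. Under the hypothesis \eqref{eq:condL} that $\La_\infty^2<1$, the coefficient $1-\La_\infty^2(\Re\lan\la,\om\ran)^2$ is bounded below by $1-\La_\infty^2>0$ and above by $1$, hence comparable to a positive constant uniformly in $\la,\om$. Therefore the existence of some $\kappa>0$ making $A_\kappa$ $L^\Phi$-dissipative is equivalent to the existence of $\kappa>0$ with $P(x,\la,\om)\geq\kappa(1-\La_\infty^2(\Re\lan\la,\om\ran)^2)$ a.e., which, given the two-sided bounds on the right factor, is in turn equivalent to $P(x,\la,\om)\geq\kappa'$ for a.e.\ $x$ and all unit $\la,\om$, i.e.\ precisely to \eqref{eq:infP}.

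I expect the main obstacle to be the careful handling of the antisymmetric term and the degenerate directions where $1-\La_\infty^2(\Re\lan\la,\om\ran)^2$ is small: one must verify that the $\kappa I$ subtraction truly does not perturb the skew part (this is where the identity $\Re(\lan\om,\la\ran-\lan\la,\om\ran)=0$ is used) and that the essential infimum over the compact-in-$(\la,\om)$ but merely measurable-in-$x$ domain is the right object. For the forward direction one uses \eqref{eq:condL} to pass from $P\geq\kappa(1-\La_\infty^2(\Re\lan\la,\om\ran)^2)$ to $\essinf P\geq\kappa(1-\La_\infty^2)>0$; for the converse one sets $\kappa$ equal to $\essinf P$ and uses $1-\La_\infty^2(\Re\lan\la,\om\ran)^2\leq 1$ to recover the weighted inequality. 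Throughout, Theorem~\ref{th:1} supplies the equivalence between the integral dissipativity condition and its algebraic pointwise form, so no new integration-by-parts or density arguments are needed—the proof is essentially an algebraic manipulation of the pointwise criterion combined with the uniform positivity afforded by \eqref{eq:condL}.
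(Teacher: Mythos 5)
Your proposal is correct and follows essentially the same route as the paper: both apply Theorem \ref{th:1} to $A-\kappa I(d^{2}/dx^{2})$, observe that the skew term is unchanged and the $\kappa$-contribution collapses to $-\kappa(|\la|^{2}-\La_{\infty}^{2}(\Re\lan\la,\om\ran)^{2})$, and then use the two-sided bound $(1-\La_{\infty}^{2})|\la|^{2}\leq|\la|^{2}-\La_{\infty}^{2}(\Re\lan\la,\om\ran)^{2}\leq|\la|^{2}$ furnished by \eqref{eq:condL} to show the weighted pointwise inequality is equivalent to \eqref{eq:infP}. The paper phrases the last step via the quotient $P/(1-\La_{\infty}^{2}(\Re\lan\la,\om\ran)^{2})$, but this is the same comparison you make.
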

\begin{proof}
Thanks to Theorem \ref{th:1}, $A-\kappa I (d^{2}/dx^{2})$ is
$L^{\Phi}$-dissipative if and only if 
\begin{equation}\label{eq:condconP}
P(x,\la,\om) - \kappa (|\la|^{2} -  \La_{\infty}^{2} (\Re \lan\la,\om\ran)^{2}) \geq 0
\end{equation}
 for almost every $x\in(a,b)$ and for any 
    $\la,\om\in\C^{m}$, $|\om|=1$. Since
    \begin{equation}\label{eq:vecchia54}
|\la|^{2} -  \La_{\infty}^{2} (\Re \lan\la,\om\ran)^{2} \geq (1-\La_{\infty}^{2})|\la|^{2} > 0
\end{equation}
    for any $\la\neq 0$, there exists $\kappa >0$ such that \eqref{eq:condconP} holds if and only if
    \begin{equation}\label{eq:vecchia55}
\essinf_{\genfrac{}{}{0 pt}{}{(x,\la,\om) \in(a,b)\times \C^m\times \C^m}{|\la|=|\om|=1}}
\frac{P(x,\la,\om)}{1 -  \La_{\infty}^{2} (\Re \lan\la,\om\ran)^{2}} >0.
\end{equation}
On the other hand inequality \eqref{eq:vecchia54} leads to
$$
P(x,\la,\om) \leq \frac{P(x,\la,\om)}{1 -  \La_{\infty}^{2} (\Re \lan\la,\om\ran)^{2}}  \leq
(1 -  \La_{\infty}^{2})^{-1} P(x,\la,\om)
$$
for almost every $x\in(a,b)$ and for any 
    $\la,\om\in\C^{m}$, $|\la|=|\om|=1$.  This shows that \eqref{eq:infP} and \eqref{eq:vecchia55}
    are equivalent and this concludes the proof.
\end{proof}

\begin{corollary} \label{co:equivstrict}
Let us assume condition \eqref{eq:condL}. 
The operator $A$ is strict $L^{\Phi}$-dissipative if and only if there exists $\kappa> 0$
such that
    \begin{gather*}
  \Re \lan \A(x) \la,\la\ran - \La_{\infty}^{2}\Re\lan \A(x)\om,\om\ran (\Re \lan\la,\om\ran)^{2}
	\cr
	+
	\La_{\infty}\Re(\lan \A(x)\om,\la\ran -\lan \A(x)\la,\om\ran)
	\Re \lan \la,\om\ran   \geq \kappa\, |\la|^{2}
    \end{gather*}
    for almost every $x\in(a,b)$ and for any 
    $\la,\om\in\C^{m}$, $|\om|=1$.
\end{corollary}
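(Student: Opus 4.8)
The plan is to combine the two preceding results, Corollary \ref{cor:2} and Theorem \ref{th:2}, and then to exploit the homogeneity of $P$ in the variable $\la$. The statement will follow almost immediately, with no genuine analytic difficulty.

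First, by Corollary \ref{cor:2}, under condition \eqref{eq:condL} the operator $A$ is strict $L^{\Phi}$-dissipative if and only if there exists $\kappa>0$ such that $A-\kappa I(d^{2}/dx^{2})$ is $L^{\Phi}$-dissipative. By Theorem \ref{th:2}, the latter holds precisely when \eqref{eq:infP} is satisfied, i.e. when the essential infimum of $P(x,\la,\om)$ over $(x,\la,\om)\in(a,b)\times\C^m\times\C^m$ with $|\la|=|\om|=1$ is strictly positive. Thus it suffices to prove that \eqref{eq:infP} is equivalent to the inequality displayed in the statement.

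The key observation is that $P(x,\la,\om)$ is homogeneous of degree $2$ in $\la$. Indeed, the term $\Re\lan\A(x)\la,\la\ran$ is quadratic in $\la$, the factor $(\Re\lan\la,\om\ran)^{2}$ is quadratic in $\la$, and the last term $\Re(\lan\A(x)\om,\la\ran-\lan\A(x)\la,\om\ran)\,\Re\lan\la,\om\ran$ is a product of two factors each linear in $\la$; hence $P(x,t\la,\om)=t^{2}P(x,\la,\om)$ for every $t\in\R$. This is the only point that requires an (entirely routine) verification.

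Granting this, suppose \eqref{eq:infP} holds, so that the essential infimum is some $c>0$ and $P(x,\la,\om)\geq c$ for almost every $x$ and all $\la,\om$ with $|\la|=|\om|=1$. For an arbitrary $\la\in\C^m\setminus\{0\}$ and $|\om|=1$ we write $\la=|\la|\,\widehat\la$ with $|\widehat\la|=1$; homogeneity then gives $P(x,\la,\om)=|\la|^{2}P(x,\widehat\la,\om)\geq c\,|\la|^{2}$, while the case $\la=0$ is trivial, so the stated inequality holds with $\kappa=c$. Conversely, if the stated inequality holds with some $\kappa>0$, then restricting to $|\la|=1$ yields $P(x,\la,\om)\geq\kappa$ for almost every $x$ and all $|\la|=|\om|=1$, so the essential infimum is at least $\kappa$ and \eqref{eq:infP} holds. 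This establishes the equivalence and completes the proof.
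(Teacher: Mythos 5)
Your proof is correct and follows essentially the same route as the paper: the authors likewise deduce the statement immediately from Corollary \ref{cor:2} and Theorem \ref{th:2}, noting that \eqref{eq:infP} is equivalent to $P(x,\la,\om)\geq\kappa|\la|^{2}$ by the degree-two homogeneity of $P$ in $\la$. You merely spell out the homogeneity verification that the paper leaves implicit.
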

\begin{proof}
It follows immediately from Corollary \ref{cor:2} and Theorem \ref{th:2},
because \eqref{eq:infP} means that there exists $\kappa>0$ such that
$$
P(x,\la,\om)\geq \kappa \, |\la|^{2}
$$
for any $\la,\, \om\in \C^{m}$, $|\om|=1$ and for almost every $x\in\Om$.
\end{proof}

%\begin{theorem}
%Let us assume condition \eqref{eq:condL}.
%  There exists $\kappa \in\R$ such that $A-\kappa I (d^{2}/dx^{2})$ is
%$L^{\Phi}$-dissipative if and only if
%$$
%\essinf_{\genfrac{}{}{0 pt}{}{(x,\la,\om) \in(a,b)\times \C^m\times \C^m}{|\la|=|\om|=1}}
%P(x,\la,\om) > -\infty\, .
%$$
%\end{theorem}

\section{Real coefficient operators}\label{sec:real}

This section is devoted to real coefficient operators. We shall give different 
necessary and sufficient conditions for the functional dissipative which are expressed in terms of the eigenvalues of the matrix $\A$. 

\begin{theorem}\label{th:4}
Let $\A$ be a real matrix $\{a_{hk}\}$ with $h,k=1,\ldots,m$.
 Let us suppose $\A=\A^{t}$ and  $\A > 0$
 (in the sense $\lan\A(x)\xi,\xi\ran > 0$, for almost every 
 $x\in(a,b)$ and for any $\xi\in\R^{m}\setminus\{0\}$).
 The operator $A$ is 
    $L^{\Phi}$-dissipative if and only if
    $$
   \La^{2}_{\infty} (\mu_{1}(x)
	+\mu_{m}(x))^{2} \leq 4\, \mu_{1}(x)\mu_{m}(x)
    $$
    almost everywhere, 
    where $\mu_{1}(x)$ and $\mu_{m}(x)$ are the smallest and the largest 
    eigenvalues of the matrix $\A(x)$ respectively. In the particular 
    case $m=2$, this condition is equivalent to
    $$
    \La^{2}_{\infty} (\tr \A(x))^{2}  \leq 4\, \det \A(x) 
    $$
    almost everywhere.
\end{theorem}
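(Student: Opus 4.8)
The plan is to start from the algebraic criterion of Theorem \ref{th:1} and exploit the symmetry of $\A$. Since $\A(x)$ is real and symmetric it is Hermitian, so $\lan\A\om,\la\ran=\overline{\lan\A\la,\om\ran}$ for all $\la,\om\in\C^{m}$; hence $\Re(\lan\A\om,\la\ran-\lan\A\la,\om\ran)=0$ and the antisymmetric middle term in \eqref{eq:condode} disappears. Thus $A$ is $L^{\Phi}$-dissipative if and only if, for almost every $x$ and all $\la,\om\in\C^{m}$ with $|\om|=1$,
\[
\Re\lan\A(x)\la,\la\ran\geq\La_{\infty}^{2}\,\Re\lan\A(x)\om,\om\ran\,(\Re\lan\la,\om\ran)^{2}.
\]

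Next I would pass to real variables. Writing $\la=\xi+i\ze$ and $\om=a+ib$ with $\xi,\ze,a,b\in\R^{m}$ and using once more that $\A$ is real symmetric, one gets $\Re\lan\A\la,\la\ran=\lan\A\xi,\xi\ran+\lan\A\ze,\ze\ran$, $\Re\lan\A\om,\om\ran=\lan\A a,a\ran+\lan\A b,b\ran$ and $\Re\lan\la,\om\ran=\xi\cdot a+\ze\cdot b$ (the cross terms cancel by symmetry). Introducing $X=(\xi,\ze)$, $Y=(a,b)\in\R^{2m}$ and the block-diagonal matrix $\widetilde{\A}=\A\oplus\A$, the criterion becomes: for almost every $x$,
\[
\lan\widetilde{\A}X,X\ran\geq\La_{\infty}^{2}\,\lan\widetilde{\A}Y,Y\ran\,(X\cdot Y)^{2}\qquad\text{for all }X\in\R^{2m},\ Y\in\R^{2m},\ |Y|=1.
\]
The gain is that $\widetilde{\A}$ is real symmetric and positive definite (because $\A>0$) and shares with $\A(x)$ the smallest and largest eigenvalues $\mu_{1}(x)$, $\mu_{m}(x)$.

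Fixing $Y$, the inequality in $X$ says exactly that the quadratic form $\widetilde{\A}-\La_{\infty}^{2}\lan\widetilde{\A}Y,Y\ran\,YY^{t}$ is positive semidefinite, since $(X\cdot Y)^{2}=\lan YY^{t}X,X\ran$. Substituting $X=\widetilde{\A}^{-1/2}Z$ turns this into $\La_{\infty}^{2}\lan\widetilde{\A}Y,Y\ran\,|\widetilde{\A}^{-1/2}Y|^{2}\leq 1$, that is $\La_{\infty}^{2}\lan\widetilde{\A}Y,Y\ran\lan\widetilde{\A}^{-1}Y,Y\ran\leq 1$. Hence $A$ is $L^{\Phi}$-dissipative if and only if
\[
\La_{\infty}^{2}\,\sup_{|Y|=1}\lan\widetilde{\A}(x)Y,Y\ran\lan\widetilde{\A}(x)^{-1}Y,Y\ran\leq 1
\]
for almost every $x$.

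Finally I would invoke the Kantorovich inequality: for a real symmetric positive definite matrix with least and greatest eigenvalues $\mu_{1},\mu_{m}$ one has $\lan\B Y,Y\ran\lan\B^{-1}Y,Y\ran\leq(\mu_{1}+\mu_{m})^{2}/(4\mu_{1}\mu_{m})$ for every unit $Y$, with equality for $Y=(e_{1}+e_{m})/\sqrt{2}$ assembled from the extreme eigenvectors (which still exist for $\widetilde{\A}$, each extreme eigenvalue having multiplicity at least two). Applied to $\B=\widetilde{\A}(x)$ this evaluates the supremum above as $(\mu_{1}(x)+\mu_{m}(x))^{2}/(4\mu_{1}(x)\mu_{m}(x))$, so the dissipativity condition is precisely $\La_{\infty}^{2}(\mu_{1}(x)+\mu_{m}(x))^{2}\leq 4\,\mu_{1}(x)\mu_{m}(x)$ almost everywhere. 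For $m=2$ one has $\mu_{1}+\mu_{m}=\tr\A$ and $\mu_{1}\mu_{m}=\det\A$, giving the stated reformulation. The only nonroutine ingredient is the sharp value of the supremum, namely the Kantorovich inequality together with the existence of a maximizer; the rest is the symmetry-driven cancellation and a standard positive-semidefiniteness argument.
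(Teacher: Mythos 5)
Your proof is correct and follows essentially the same route as the paper's: the symmetry of $\A$ kills the antisymmetric term in \eqref{eq:condode}, what remains is a real quadratic-form inequality whose sharp constant is the Kantorovich bound $(\mu_{1}+\mu_{m})^{2}/(4\mu_{1}\mu_{m})$ (the paper outsources exactly these two reductions to Theorem 5 and Lemma 4 of \cite{CM2006}). The only organizational difference is that you realify by doubling to $\R^{2m}$ with $\A\oplus\A$ rather than reducing to real $\xi,\om\in\R^{m}$ as in \cite{CM2006}; since $\A\oplus\A$ has the same extreme eigenvalues as $\A$, the conclusions coincide.
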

\begin{proof}
In view of Theorem \ref{th:1} we have the $L^{\Phi}$-dissipativity of $A$ if and only if 
\eqref{eq:condode} holds for almost every $x\in(a,b)$ and for any 
    $\la,\om\in\C^{m}$, $|\om|=1$. Reasoning as in the proof of \cite[Theorem 5, p.255]{CM2006},
    we see that in the present case this condition is equivalent to
   $$
        \lan \A(x) \xi,\xi\ran-
        \La^{2}_{\infty}  \lan \A(x) \om,\om\ran (\lan \xi,\om\ran)^{2} \geq 0
 $$
    for almost every $x\in(a,b)$ and for 
    any $\xi, \om\in\R^{m}$, $|\om|=1$. 
    As in  \cite[Theorem 5, p.255]{CM2006}, this inequality is satisfied if and only if
    $$
     \La^{2}_{\infty}(\mu_{h}\om_h^2) (\mu_k^{-1}\om_k^2) \leq 1
    $$
for any $\om\in \R^m$, $|\om|=1$, i.e., if and only if (see \cite[Lemma 4, p.253]{CM2006})
$$
\La^{2}_{\infty}\frac{(\mu_{1}
	+\mu_{m})^{2}}{4\mu_{1}\mu_{m}} \leq 1.
$$
The result for $m=2$ follows from the identities
\begin{equation}
    \mu_{1}(x)\mu_{2}(x)=\det \A(x) , \quad \mu_{1}(x)+\mu_{2}(x)=\tr \A(x) .
    \label{identeigen}
\end{equation}
\end{proof}

In the rest of the Section we assume condition \eqref{eq:condL}.

\begin{corollary}\label{cor:3}
     Let $\A$ be a real and symmetric matrix. 
    Denote by $\mu_{1}(x)$ and $\mu_{m}(x)$ the 
       smallest and the largest eigenvalues of $\A(x)$ respectively.
       There exists $\kappa >0$ such that 
       $A-\kappa I(d^{2}/dx^{2})$ is 
       $L^{\Phi}$-dissipative if and only if
     \begin{equation}
	       \essinf_{x\in(a,b)}
	     \left[\left(1+\sqrt{1-\La_{\infty}^{2}}\right)\mu_{1}(x) - \left(1-\sqrt{1-\La_{\infty}^{2}} \right)\,
	     \mu_{m}(x)
	     \right]
		     >0.
	 \label{primacondmu}
     \end{equation}
   In the particular case  $m=2$, condition \eqref{primacondmu} is
   equivalent to
   \begin{equation} 
	   \essinf_{x\in(a,b)}\left[ \sqrt{1-\La_{\infty}^{2}} \tr \A(x) - 
	  \sqrt{(\tr\A(x))^{2}- 
	   4\det\A(x)} \right]
	   >0.
       \label{primacondmu2}
   \end{equation}
\end{corollary}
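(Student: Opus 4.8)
The plan is to deduce everything from the algebraic criterion already proved for real symmetric positive matrices in Theorem~\ref{th:4}, but applied to the shifted matrix $\A-\kappa I$ attached to the operator $A-\kappa I(d^2/dx^2)$ rather than to $\A$ itself. Writing $c=\sqrt{1-\La_\infty^2}$, which is a well-defined number in $(0,1]$ thanks to \eqref{eq:condL}, and letting $\mu_1(x)\leq\dots\leq\mu_m(x)$ denote the eigenvalues of $\A(x)$, the elementary identity I would rely on is the factorization
\[
4(\mu_1-\kappa)(\mu_m-\kappa)-\La_\infty^2\big((\mu_1-\kappa)+(\mu_m-\kappa)\big)^2=(G_1-2c\kappa)(G_2-2c\kappa),
\]
with $G_1=(1+c)\mu_1-(1-c)\mu_m$ and $G_2=(1+c)\mu_m-(1-c)\mu_1$. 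One checks it by substituting $\La_\infty^2=1-c^2$ and recognizing a difference of two squares; note that $G_1$ is exactly the quantity appearing in \eqref{primacondmu} and that $G_2-2c\mu_1=(1+c)(\mu_m-\mu_1)\geq 0$, while $2c\mu_1-G_1=(1-c)(\mu_m-\mu_1)\geq 0$, so $G_1\leq 2c\mu_1\leq G_2$.

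For necessity, suppose $A-\kappa_0 I(d^2/dx^2)$ is $L^\Phi$-dissipative for some $\kappa_0>0$. Applying Corollary~\ref{cor:1} to this operator (whose matrix is $\A-\kappa_0 I$) gives $\Re\lan(\A-\kappa_0 I)\la,\la\ran\geq 0$ a.e., hence $\mu_1(x)\geq\kappa_0$ a.e. Since $L^\Phi$-dissipativity is additive in the coefficient matrix and $(\kappa_0/2)I(d^2/dx^2)$ is itself $L^\Phi$-dissipative (its matrix is a positive multiple of $I$, for which Theorem~\ref{th:4} holds under \eqref{eq:condL}), the operator $A-(\kappa_0/2)I(d^2/dx^2)$ is $L^\Phi$-dissipative too, and now $\A-(\kappa_0/2)I$ is strictly positive a.e. I can therefore invoke Theorem~\ref{th:4}, whose extreme eigenvalues are $\mu_1-\kappa$ and $\mu_m-\kappa$ with $\kappa=\kappa_0/2$, obtaining $\La_\infty^2((\mu_1-\kappa)+(\mu_m-\kappa))^2\leq 4(\mu_1-\kappa)(\mu_m-\kappa)$ a.e. By the factorization this is $(G_1-2c\kappa)(G_2-2c\kappa)\geq 0$; but $G_2-2c\kappa\geq 2c\mu_1-2c\kappa\geq 2c\kappa>0$ a.e. (using $\mu_1\geq\kappa_0=2\kappa$), so the second factor is positive and $G_1\geq 2c\kappa$ a.e., which is \eqref{primacondmu}.

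For sufficiency I would run this in reverse: assuming $\essinf_x G_1=:\delta>0$, the bound $G_1\leq 2c\mu_1$ forces $\essinf_x\mu_1\geq\delta/(2c)>0$, so a choice of $\kappa>0$ with $2c\kappa<\delta$ and $\kappa<\essinf_x\mu_1$ makes $\A-\kappa I$ strictly positive a.e., keeps $G_1-2c\kappa\geq\delta-2c\kappa>0$ and $G_2-2c\kappa>0$, and hence yields the Theorem~\ref{th:4} inequality for $\A-\kappa I$; thus $A-\kappa I(d^2/dx^2)$ is $L^\Phi$-dissipative. Finally, for $m=2$ the identities \eqref{identeigen} give $\mu_1+\mu_m=\tr\A$ and $\mu_m-\mu_1=\sqrt{(\tr\A)^2-4\det\A}$, whence $G_1=c(\mu_1+\mu_m)-(\mu_m-\mu_1)=\sqrt{1-\La_\infty^2}\,\tr\A-\sqrt{(\tr\A)^2-4\det\A}$, turning \eqref{primacondmu} into \eqref{primacondmu2}. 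The one delicate point, and the step I would be most careful about, is this passage to strict positivity of the shifted matrix before applying Theorem~\ref{th:4}, since that theorem presupposes $\A-\kappa I>0$ a.e.; it is precisely the additivity of dissipativity together with the bound $\mu_1\geq\kappa_0$ that lets me shrink $\kappa$ just enough to secure strict positivity without losing dissipativity.
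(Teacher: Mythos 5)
Your proposal is correct and follows essentially the same route as the paper: both reduce to Theorem \ref{th:4} applied to the shifted matrix $\A-\kappa I$, use Corollary \ref{cor:1} to secure the strict positivity that Theorem \ref{th:4} requires, and rest on the same algebraic identity $4ab-\La_\infty^2(a+b)^2=(1-\La_\infty^2)(a+b)^2-(a-b)^2$, which you merely present in factored form $(G_1-2c\kappa)(G_2-2c\kappa)$ rather than via the roots of the quadratic \eqref{eq:70}. The only cosmetic difference is how you arrange strict positivity before invoking Theorem \ref{th:4} (halving $\kappa_0$ and using additivity of dissipativity, versus the paper's observation from \eqref{eq:condconP} that dissipativity persists for every $\kappa'<\kappa$), and both devices are sound.
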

\begin{proof}
\textit{Necessity.} 
From inequality \eqref{eq:condconP} we deduce that $A- \kappa' I (d^2/dx^2)$ is $L^{\Phi}$-dissipative
for any $0<\kappa'<\kappa$. Moreover, 
by Corollary \ref{cor:1}, $\A(x)-\kappa'I > 0$ almost everywhere for any $0<\kappa'<\kappa$.
By Theorem \ref{th:4} we find that
 \begin{equation}\label{eq:69}
 \La^{2}_{\infty} (\mu_{1}(x)
	+\mu_{m}(x )- 2 \kappa')^{2} \leq 4\, (\mu_{1}(x)-\kappa')(\mu_{m}(x)-\kappa')
\end{equation}
    almost everywhere. By observing that $4\mu_1\mu_m=(\mu_1+\mu_m)^2-(\mu_1-\mu_m)^2$,
    the last inequality can be written as
\begin{equation}\label{eq:70}
(1- \La^{2}_{\infty}) (\mu_{1}(x)
	+\mu_{m}(x )- 2 \kappa')^{2} -(\mu_1(x)-\mu_m(x))^2 \geq 0
\end{equation}
    almost everywhere. Since this holds for any $\kappa'<\kappa$, we have that $\kappa$ is less than or equal to the smallest root of
    the left hand side  of \eqref{eq:70}, i.e.
    \begin{equation}\label{eq:primacondk}
 \kappa \leq \frac{1}{2} \left(\left(1+1/\sqrt{1-\La^{2}_{\infty}}\right)\mu_1(x) + \left(1-1/\sqrt{1-\La^{2}_{\infty}}\right)\mu_m(x)\right)
\end{equation}
     almost everywhere and \eqref{primacondmu} is proved.
     
     \textit{Sufficiency.} Let $\kappa'$ be such that
     \begin{equation}\label{eq:newk}
 0<\kappa' <  \essinf_{x\in(a,b)}  \frac{1}{2} \left(\left(1+1/\sqrt{1-\La^{2}_{\infty}}\right)\mu_1(x) + \left(1-1/\sqrt{1-\La^{2}_{\infty}}\right)\mu_m(x)\right).
\end{equation}
     
     Since  $(1-1/\sqrt{1-\La^{2}_{\infty}})\mu_m(x)\leq (1-1/\sqrt{1-\La^{2}_{\infty}} )\mu_1(x)$ and then
     \begin{gather*}
     \left(1+1/\sqrt{1-\La^{2}_{\infty}} \right) \mu_1(x) +
     \left(1-1/\sqrt{1-\La^{2}_{\infty}} \right)\mu_m(x)
     \leq 2\mu_1(x),
\end{gather*}
condition \eqref{eq:newk} shows that $\A(x)-\kappa' I>0$ almost everywhere. Moreover
inequality \eqref{eq:newk} implies that $\kappa'$ satisfies  \eqref{eq:69}.
The result follows from Theorem \ref{th:4}.

If $m=2$  the equivalence between \eqref{primacondmu} and \eqref{primacondmu2} 
follows from the identities \eqref{identeigen}.
\end{proof}

Under an additional assumption on the matrix $\A$ we have also

 \begin{corollary}
     		Let $\A$ be a real and symmetric matrix. 
     		Suppose $\A > 0$ almost everywhere.
     		Denote by $\mu_{1}(x)$ and $\mu_{m}(x)$ the 
     		   smallest and the largest eigenvalues of $\A(x)$ respectively.
     		 If  there exists $\kappa >0$ such that 
     		   $A-\kappa I(d^{2}/dx^{2})$ is 
     		   $L^{\Phi}$-dissipative, then
     		 \begin{equation}
     			   \essinf_{x\in(a,b)}
     			 \left[
     			 \mu_{1}(x)\mu_{m}(x) - \frac{\La^{2}_{\infty}}{2}
     			 (\mu_{1}(x)+\mu_{m}(x))^{2}
     			 \right]
     				 >0.
     		     \label{primacondmun}
     		 \end{equation}
		 If, in addition,  there exists
		$C$ such that 
		\begin{equation}
		    \lan\A(x)\xi,\xi\ran \leq C|\xi|^{2}
		    \label{condC}
		\end{equation}
		for almost every $x\in(a,b)$ and for any $\xi\in\R^{m}$,
		the converse is also true.
        In the particular case  $m=2$ condition \eqref{primacondmun} 
        is equivalent to
        $$
             \essinf_{x\in(a,b)}\left[\det \A(x)
     	       -\frac{\La^{2}_{\infty}}{2}
     	    (\tr\A(x))^{2}\right] > 0 .
         $$
        \end{corollary}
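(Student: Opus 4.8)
The plan is to transfer both implications to the pointwise eigenvalue inequality and then read them off from Corollary~\ref{cor:3} together with Theorem~\ref{th:4}. Since $\A=\A^{t}>0$ almost everywhere, at almost every $x$ one has $0<\mu_1(x)\le\mu_m(x)$; it is convenient to put $S=\mu_1+\mu_m$ and $D=\mu_m-\mu_1\ge 0$, so that $\mu_1\mu_m=\tfrac14(S^2-D^2)$ and the bracket in \eqref{primacondmun} equals $\tfrac14\big[(1-2\La_\infty^2)S^2-D^2\big]$. On the other hand, the quantity controlled by Corollary~\ref{cor:3} is $(1+\sqrt{1-\La_\infty^2})\,\mu_1-(1-\sqrt{1-\La_\infty^2})\,\mu_m=\sqrt{1-\La_\infty^2}\,S-D$, and by that corollary the existence of $\kappa>0$ with $A-\kappa I(d^2/dx^2)$ dissipative is equivalent to $\essinf_x[\sqrt{1-\La_\infty^2}\,S-D]>0$. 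The whole statement therefore reduces to comparing this with $\essinf_x[(1-2\La_\infty^2)S^2-D^2]>0$, using the positivity hypothesis for the first part and the additional bound \eqref{condC}, i.e. $\mu_m\le C$, for the converse.

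For the converse I would argue directly and this is the part I expect to go through cleanly. Assuming \eqref{primacondmun}, there is $\delta>0$ with $(1-2\La_\infty^2)S^2-D^2\ge 4\delta$ a.e.; adding $\La_\infty^2 S^2\ge 0$ gives $(1-\La_\infty^2)S^2-D^2\ge 4\delta$, i.e. $(\sqrt{1-\La_\infty^2}\,S-D)(\sqrt{1-\La_\infty^2}\,S+D)\ge 4\delta$. The bound \eqref{condC} yields $S\le 2C$, hence $\sqrt{1-\La_\infty^2}\,S+D\le 2C(1+\sqrt{1-\La_\infty^2})$, and therefore $\sqrt{1-\La_\infty^2}\,S-D$ is bounded below by a positive constant a.e. By Corollary~\ref{cor:3} this is exactly the existence of the required $\kappa$. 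The same computation shows in passing that \eqref{primacondmun} forces $\essinf_x\mu_1>0$, since $\mu_1\mu_m\ge\delta$ and $\mu_m\le C$.

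For the first part I would start from the inequalities produced inside the proof of Corollary~\ref{cor:3}: if $A-\kappa I(d^2/dx^2)$ is dissipative, then $\A-\kappa'I>0$ and \eqref{eq:69}, equivalently \eqref{eq:70}, hold for every $0<\kappa'<\kappa$; in particular $\essinf_x\mu_1\ge\kappa>0$. The task is then to combine the family $(1-\La_\infty^2)(S-2\kappa')^2\ge D^2$ (for all $\kappa'<\kappa$) with this uniform lower bound on $\mu_1$ so as to produce the bracket $\mu_1\mu_m-\tfrac{\La_\infty^2}{2}S^2$. Here I expect the main obstacle to lie: the bare limit $\kappa'\to 0^+$ returns only the ``$\La_\infty^2/4$'' threshold $(1-\La_\infty^2)S^2\ge D^2$, which is plain dissipativity and is strictly weaker than the stated condition. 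To reach the sharper constant $\La_\infty^2/2$ one must genuinely exploit the strict positive-definiteness of $\A$ together with the margin encoded by $\kappa$, choosing $\kappa'$ adapted to $S$ and $D$ rather than letting it vanish; isolating the precise algebraic step that upgrades $\La_\infty^2/4$ to $\La_\infty^2/2$ is the crux, and it is exactly where the positivity hypothesis (absent in Corollary~\ref{cor:3}) should enter decisively.

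Finally, the case $m=2$ requires no separate work: by the identities \eqref{identeigen} one has $\mu_1\mu_2=\det\A$ and $\mu_1+\mu_2=\tr\A$, so the bracket in \eqref{primacondmun} is literally $\det\A-\tfrac{\La_\infty^2}{2}(\tr\A)^2$ and the stated equivalence is a direct rewriting of the general condition.
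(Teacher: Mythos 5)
Your converse (``sufficiency'') argument is correct and is essentially the paper's own proof: the paper performs the same factorization, writing the relevant quantity as the product of the two linear combinations $\bigl((1\pm 1/\sqrt{1-\La^{2}_{\infty}})\mu_{1}+(1\mp 1/\sqrt{1-\La^{2}_{\infty}})\mu_{m}\bigr)$ --- which in your notation are $(\sqrt{1-\La^{2}_{\infty}}\,S\mp D)/\sqrt{1-\La^{2}_{\infty}}$ --- bounds the large factor by a constant using \eqref{condC}, and concludes via Corollary \ref{cor:3}; the case $m=2$ is, as you say, an immediate rewriting through \eqref{identeigen}.

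The necessity direction is where your proposal stops, and the gap you flagged is real, but not of the kind you expect: the ``algebraic step that upgrades $\La^{2}_{\infty}/4$ to $\La^{2}_{\infty}/2$'' does not exist. Put $a=1/\sqrt{1-\La^{2}_{\infty}}$, $X=(1+a)\mu_{1}+(1-a)\mu_{m}$, $Y=(1-a)\mu_{1}+(1+a)\mu_{m}$, and note $X\leq Y$ while \eqref{eq:primacondk} gives $2\kappa\leq X$; hence $4\kappa^{2}\leq X^{2}\leq XY$, and a direct expansion yields
\begin{equation*}
XY=\frac{4\mu_{1}\mu_{m}-\La_{\infty}^{2}(\mu_{1}+\mu_{m})^{2}}{1-\La_{\infty}^{2}},
\qquad\text{i.e.}\qquad
(1-\La_{\infty}^{2})\,\kappa^{2}\leq \mu_{1}\mu_{m}-\frac{\La_{\infty}^{2}}{4}(\mu_{1}+\mu_{m})^{2}
\end{equation*}
almost everywhere --- with the constant $1/4$, not $1/2$. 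Observe that this argument uses the fixed $\kappa$ from \eqref{eq:primacondk} rather than the limit $\kappa'\to 0^{+}$, so it does produce a strictly positive essential infimum; your worry that one only recovers plain dissipativity is answered by \eqref{eq:primacondk}, and no choice of $\kappa'$ adapted to $S$ and $D$ is needed. On the other hand, the $\La^{2}_{\infty}/2$ bound is genuinely not necessary: for any $\La_{\infty}\neq 0$ take a constant symmetric positive matrix with $\sqrt{\max\{1-2\La_{\infty}^{2},0\}}<D/S<\sqrt{1-\La_{\infty}^{2}}$ (e.g.\ $\A=I$ when $\La^{2}_{\infty}>1/2$); then the bracket in \eqref{primacondmu} is a positive constant, so the required $\kappa$ exists by Corollary \ref{cor:3}, and \eqref{condC} holds, yet $\mu_{1}\mu_{m}-\frac{\La^{2}_{\infty}}{2}(\mu_{1}+\mu_{m})^{2}<0$. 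In fact the paper's proof commits exactly this slip at its final step, where $4\kappa^{2}\leq XY$ is said to ``be written as'' the inequality with $\La^{2}_{\infty}/2$: the algebra above gives $\La^{2}_{\infty}/4$, which is also the constant consistent with the dissipativity threshold $\La^{2}_{\infty}(\mu_{1}+\mu_{m})^{2}\leq 4\mu_{1}\mu_{m}$ of Theorem \ref{th:4} (and, for $m=2$, with $\det\A-\frac{\La^{2}_{\infty}}{4}(\tr\A)^{2}$). With the corrected constant your converse works verbatim --- your step of ``adding $\La_{\infty}^{2}S^{2}\geq 0$'' becomes superfluous, since the hypothesis is then directly $(1-\La_{\infty}^{2})S^{2}-D^{2}\geq 4\delta$ --- and the two-line computation above closes the necessity you left open.
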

 \begin{proof}
         \textit{Necessity.} By the proof of Corollary \ref{cor:3}, 
        inequality \eqref{eq:primacondk} holds. On the other hand we have
	 	  \begin{gather*}
\left(\left(1+1/\sqrt{1-\La^{2}_{\infty}}\right)\mu_1(x) + \left(1-1/\sqrt{1-\La^{2}_{\infty}}\right)\mu_m(x)\right) \\
\leq \left(\left(1-1/\sqrt{1-\La^{2}_{\infty}}\right)\mu_1(x) + \left(1+1/\sqrt{1-\La^{2}_{\infty}}\right)\mu_m(x)\right)
\end{gather*}
	 	    and then
		    \begin{gather*}
4k^{2}\leq \left(\left(1+1/\sqrt{1-\La^{2}_{\infty}}\right)\mu_1(x) + \left(1-1/\sqrt{1-\La^{2}_{\infty}}\right)\mu_m(x)\right) \\
\times 
\left(\left(1-1/\sqrt{1-\La^{2}_{\infty}}\right)\mu_1(x) + \left(1+1/\sqrt{1-\La^{2}_{\infty}}\right)\mu_m(x)\right)
\end{gather*}
almost everywhere.
	 	This inequality can be written as
$$
	         (1-\La^{2}_{\infty}) k^2 \leq 
	              \mu_{1}(x)\mu_{m}(x) - \frac{\La^{2}_{\infty}}{2}
	                           (\mu_{1}(x)+\mu_{m}(x))^{2}
$$
	 	and \eqref{primacondmun} is proved.
		
		\textit{Sufficiency.} Let $h>0$ such that
	 $$
	(1-\La^{2}_{\infty}) h \leq 
	              \mu_{1}(x)\mu_{m}(x) - \frac{\La^{2}_{\infty}}{2}
	                           (\mu_{1}(x)+\mu_{m}(x))^{2}
	 $$
	 almost everywhere, i.e.
	 		    \begin{gather*}
4 h \leq \left(\left(1+1/\sqrt{1-\La^{2}_{\infty}}\right)\mu_1(x) + \left(1-1/\sqrt{1-\La^{2}_{\infty}}\right)\mu_m(x)\right) \\
\times 
\left(\left(1-1/\sqrt{1-\La^{2}_{\infty}}\right)\mu_1(x) + \left(1+1/\sqrt{1-\La^{2}_{\infty}}\right)\mu_m(x)\right)
\end{gather*}
	 almost everywhere. Since $\mu_{1}(x)> 0$, we have also
	\begin{gather*}
 \left(1-1/\sqrt{1-\La^{2}_{\infty}}\right)\mu_1(x) + \left(1+1/\sqrt{1-\La^{2}_{\infty}}\right)\mu_m(x) \\
 \leq \left(1+1/\sqrt{1-\La^{2}_{\infty}}\right)\mu_m(x)\end{gather*} 
	 and then
	 \begin{gather*}
4h 
\leq \left(\left(1+1/\sqrt{1-\La^{2}_{\infty}}\right)^{2} \mu_1(x) + \left(1-1/(1-\La^{2}_{\infty})\right)\mu_m(x)\right) \esssup_{y\in(a,b)}\mu_m(y)
\end{gather*}
%•
%	 (1+\sqrt{p\,p'}/2)^{-1}p\, p' h \cr
%	 \leq
%	 \left[(1+\sqrt{p\,p'}/2)\,
%				 \mu_{1}(x)+ (1-\sqrt{p\,p'}/2)\,
%				   \mu_{m}(x) \right]
%				   \esssup_{y\in(a,b)}\mu_{m}(y)
%				   }
%	 $$
	 almost everywhere. By \eqref{condC}, $\esssup\mu_{m}$ is finite and
	 by \eqref{primacondmun} it is greater than zero. Then
	 \eqref{primacondmu} holds and Corollary \ref{cor:3} gives the result.
     \end{proof}

\begin{remark} As showed in \cite[Remark 2, p.258]{CM2006}, condition \eqref{condC}
cannot be omitted. 
\end{remark}

\section{Functional dissipativity  for some systems of partial differential equations}
\label{sec:PDE}

In this Section we consider  the particular class of matrix operators defined as
 \begin{equation}\label{eq:47}
    Au = \de_{h}(\A^{h}(x)\de_{h}u)
\end{equation}
where $\A^{h}(x)=\{a_{ij}^{h}(x)\}$  ($i,j=1,\ldots,m$)
are matrices
with complex locally integrable entries defined in a domain 
$\Om\subset\R^{n}$ ($h=1,\ldots,n$).
We  give necessary and sufficient condition for its functional dissipativity.

We start by considering the strict $L^{\Phi}$-dissipativity of operator \eqref{eq:47} and
by explicitly writing the next result, which is  particular case of Lemma \ref{le:lemma2}.
\begin{lemma}\label{le:lemma2bis}
The operator  \eqref{eq:47} is strict $L^{\Phi}$-dissipative if and only if there exists $\kappa>0$
such that
\begin{equation}\label{eq:cond1bis}
\begin{gathered}
\Re \int_{\Om} \Big(\lan \A^{h} \de_h	 v, \de_h v\ran 
- \La^{2}(|v|)\, |v|^{-4} \lan \A^{h} v,v\ran (\Re \lan v, \de_{h} v\ran)^{2} \\
+\La(|v|)\, |v|^{-2} \lan \left(\A^{h}- (\A^{h})^*\right)v, \de_h v \ran \Re \lan v, \de_{h} v\ran  
\Big) dx  
\geq  \kappa \int_{\Om} |\nabla v|^{2}dx
\end{gathered}
\end{equation}
for any $v\in (\Cspt^{1}(\Om))^{m}$.
\end{lemma}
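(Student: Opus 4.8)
The plan is to obtain this statement as a direct specialization of Lemma \ref{le:lemma2}, which already characterizes the strict $L^{\Phi}$-dissipativity of the general operator \eqref{eq:A} by means of condition \eqref{eq:cond1str}. The key observation is that the operator \eqref{eq:47} is precisely \eqref{eq:A} with the off-diagonal coefficient blocks suppressed: writing $\A^{hk}=\A^{h}\,\delta_{hk}$ (no summation, $\delta_{hk}$ the Kronecker symbol), the general expression $\de_h(\A^{hk}\de_k u)$ collapses to $\de_h(\A^{h}\de_h u)$. Thus the only work is to substitute $\A^{hk}=\A^{h}\,\delta_{hk}$ into \eqref{eq:cond1str} and simplify each of the three terms of the integrand, checking that they reproduce \eqref{eq:cond1bis} term by term.

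First I would treat the leading term. Under the summation convention, the substitution forces $h=k$, so $\lan \A^{hk}\de_k v,\de_h v\ran$ reduces to the diagonal sum $\lan \A^{h}\de_h v,\de_h v\ran$, which is exactly the first term of \eqref{eq:cond1bis}. Next, for the purely quadratic (third) term, setting $\A^{hk}=\A^{h}\delta_{hk}$ again restricts the double sum $\lan \A^{hk}v,v\ran\,\Re\lan v,\de_k v\ran\,\Re\lan v,\de_h v\ran$ to its diagonal, so that the product $\Re\lan v,\de_k v\ran\,\Re\lan v,\de_h v\ran$ becomes $(\Re\lan v,\de_h v\ran)^{2}$ and the term takes the form $-\La^{2}(|v|)\,|v|^{-4}\lan \A^{h}v,v\ran\,(\Re\lan v,\de_h v\ran)^{2}$, matching \eqref{eq:cond1bis}.

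The one step requiring a little care is the middle term, containing the antisymmetric combination $\A^{hk}-(\A^{kh})^{*}$. Here one must evaluate $(\A^{kh})^{*}$ correctly: since $\A^{kh}=\A^{k}\delta_{kh}$, its conjugate transpose on the diagonal is $(\A^{h})^{*}$, so that $\A^{hk}-(\A^{kh})^{*}$ reduces to $\A^{h}-(\A^{h})^{*}$ when $h=k$ and vanishes otherwise. The surviving sum is then $\La(|v|)\,|v|^{-2}\lan(\A^{h}-(\A^{h})^{*})v,\de_h v\ran\,\Re\lan v,\de_h v\ran$, precisely the corresponding term of \eqref{eq:cond1bis}. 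Since the right-hand side $\kappa\int_{\Om}|\nabla v|^{2}\,dx$ is untouched by the substitution, condition \eqref{eq:cond1str} specializes verbatim to \eqref{eq:cond1bis}, and the asserted equivalence follows at once from Lemma \ref{le:lemma2}. I expect no genuine obstacle beyond the index bookkeeping in this last term; the whole argument is a routine diagonal reduction of the general criterion.
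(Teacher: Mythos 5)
Your proposal is correct and is exactly the paper's route: the authors introduce Lemma \ref{le:lemma2bis} as ``a particular case of Lemma \ref{le:lemma2}'', i.e.\ the substitution $\A^{hk}=\A^{h}\delta_{hk}$ into \eqref{eq:cond1str}, and your term-by-term diagonal reduction (including the correct identification $(\A^{kh})^{*}\mapsto(\A^{h})^{*}$ on the diagonal) just spells out the index bookkeeping the paper leaves implicit.
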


By $y_h$ we denote the $(n-1)$-dimensional
vector $(x_{1},\ldots,x_{h-1}, x_{h+1}, \ldots ,x_{n})$
and we set
$\om(y_{h})=\{ x_{h}\in\R\ |\ x\in\Om\}$.

\begin{lemma}\label{lemma:5bis}
Let us assume condition \eqref{eq:condL}. 
    The operator \eqref{eq:47} is strict $L^{\Phi}$-dissipative if and only if
    the ordinary differential operators \eqref{eq:ODops}
    are uniformly strict $L^{\Phi}$-dissipative in $\om(y_{h})$
    for almost every $y_{h}\in \R^{n-1}$
    ($h=1,\ldots,n$).  This condition is void if 
    $\om(y_{h})=\emptyset$.
\end{lemma}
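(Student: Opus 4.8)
The plan is to run everything through the functional characterization of Lemma \ref{le:lemma2bis} and to exploit that the operator \eqref{eq:47} is diagonal in the differentiation directions: the integrand of \eqref{eq:cond1bis} is a sum $\sum_{h}I_{h}$ whose $h$-th summand $I_{h}$ involves only $\A^{h}$, $v$ and the single derivative $\de_{h}v$, while $|\nabla v|^{2}=\sum_{h}|\de_{h}v|^{2}$. The key remark is that, for fixed $y_{h}$, the map $x_{h}\mapsto v(x)$ belongs to $[\Cspt^{1}(\om(y_{h}))]^{m}$ and, by Fubini's theorem, $\int_{\Om}I_{h}\,dx=\int_{\R^{n-1}}G_{h}^{y_{h}}\,dy_{h}$, where $G_{h}^{y_{h}}$ is \emph{exactly} the one-dimensional strict-dissipativity functional of the slice operator \eqref{eq:ODops} (the left-hand side of the strict form of Lemma \ref{lem:1} for the matrix $\A^{h}(\cdot,y_{h})$ evaluated at $v(\cdot,y_{h})$): indeed the transverse term $\La(|v|)|v|^{-2}\lan(\A^{h}-(\A^{h})^{*})v,\de_{h}v\ran\Re\lan v,\de_{h}v\ran$ equals $\La(|v|)|v|^{-2}\Re(\lan\A^{h}v,\de_{h}v\ran-\lan\A^{h}\de_{h}v,v\ran)\Re\lan v,\de_{h}v\ran$, and $\La$ is evaluated at the same modulus $|v|$ in both functionals.

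For the sufficiency I would assume the slice operators \eqref{eq:ODops} uniformly strict $L^{\Phi}$-dissipative, i.e. a single $\kappa>0$ with $G_{h}^{y_{h}}\geq\kappa\int_{\om(y_{h})}|\de_{h}v|^{2}dx_{h}$ for almost every $y_{h}$ and every $h$ (applied on each connected component of $\om(y_{h})$, since both sides are additive over components). Inserting $v(\cdot,y_{h})$, integrating in $y_{h}$ and using the identity above gives $\int_{\Om}I_{h}\,dx\geq\kappa\int_{\Om}|\de_{h}v|^{2}dx$; summing over $h$ yields \eqref{eq:cond1bis}, so Lemma \ref{le:lemma2bis} gives the strict $L^{\Phi}$-dissipativity of \eqref{eq:47}.

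The substantial direction is the necessity, where directions and slices must be decoupled. Assuming \eqref{eq:47} strict $L^{\Phi}$-dissipative with constant $\kappa$, I would fix $h$ and a common Lebesgue point $x^{0}$ of all the matrices $\A^{k}$. First I would freeze the coefficients: inserting $v_{\ep}(x)=\phi((x-x^{0})/\ep)$ in \eqref{eq:cond1bis}, rescaling and letting $\ep\to0^{+}$, the Lebesgue differentiation theorem replaces each $\A^{k}(x)$ by $\A^{k}(x^{0})$, giving the constant-coefficient inequality $\sum_{k}\int_{\R^{n}}I_{k}^{0}\,dz\geq\kappa\sum_{k}\int_{\R^{n}}|\de_{k}\phi|^{2}dz$ for all $\phi\in[\Cspt^{1}(\R^{n})]^{m}$, where $I_{k}^{0}$ is $I_{k}$ built with $\A^{k}(x^{0})$. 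Next I would isolate the $h$-direction with the product profile $\phi(z)=\Psi(z_{h})\prod_{j\neq h}\eta_{R}(z_{j})$, where $\Psi\in[\Cspt^{1}(\R)]^{m}$ and $\eta_{R}$ equals $1$ on $[-R,R]$ with a transition layer of fixed width. Dividing by the $h$-normalization (of order $R^{\,n-1}$) and letting $R\to\infty$, the directions $k\neq h$ contribute $O(R^{\,n-2})$ and drop out, while on the plateau the profile equals $1$, so $\La$ is evaluated at $|\Psi|$ and one is left with $\int_{\R}I_{h}^{0}\,dz_{h}\geq\kappa\int_{\R}|\Psi'|^{2}dz_{h}$. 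By the strict form of Lemma \ref{lem:1} the constant operator with matrix $\A^{h}(x^{0})$ is strict $L^{\Phi}$-dissipative with constant $\kappa$, and Corollary \ref{co:equivstrict} turns this into the algebraic inequality for $\A^{h}(x^{0})$ with right-hand side $\kappa|\la|^{2}$. Since $x^{0}$ ranges over a set of full measure and $h$ is arbitrary, Corollary \ref{co:equivstrict} read in the variable $x_{h}$ shows that each slice operator \eqref{eq:ODops} is strict $L^{\Phi}$-dissipative with the common constant $\kappa$, i.e. uniformly.

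The hard part is precisely this necessity argument, and the delicate points are the design of the test functions and the freezing of merely $L^{1}_{\mathrm{loc}}$ coefficients. The plateau profiles must be chosen so that $\La(|v|)$ is evaluated at $|\Psi|$ on the region carrying the dominant mass while the transverse gradient terms stay of strictly lower order; a fixed-width transition layer (rather than a self-similar one) is what makes the cross-direction integrals $O(R^{\,n-2})=o(R^{\,n-1})$. The freezing is legitimate despite the $\ep^{-2}$ growth of the gradients and the singular weights $|v|^{-2}$, $|v|^{-4}$, because these weights always occur multiplied by gradient factors and combine into quantities bounded on $\{v\neq0\}$, while $|\La|\leq\La_{\infty}<1$ is bounded, so the passages to the limit follow by dominated convergence. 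This is exactly where hypothesis \eqref{eq:condL} enters, both to control these limits and to make Corollary \ref{co:equivstrict} available.
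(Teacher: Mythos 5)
Your proof is correct and shares the paper's overall architecture: Fubini plus additivity over the directions for the sufficiency, and for the necessity a reduction to constant coefficients at Lebesgue points via $v_\ep(x)=\phi((x-x^0)/\ep)$, followed by isolation of each direction, the one-dimensional strict criterion, and Corollary \ref{co:equivstrict} to return to the slices. The one genuinely different ingredient is the direction-isolation device in the constant-coefficient case. The paper tests with the anisotropic profile $u_{\ep}(x)=\al(x_{k}/\ep)\,\be(y_{k})$ and lets $\ep\to 0^{+}$; because the functional is nonlinear, this leaves a residual dependence on the transverse amplitude $\be(y_{k})$ inside $\ga(t,\be(y_{k}))$, which is then removed by a separate contradiction argument with cut-offs $\be_{\ep}$ equal to $1$ on $B_{1}(0)$ and supported in $B_{1+\ep}(0)$ (the passage from \eqref{eq:newdis} to \eqref{eq:conuno}). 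Your expanding plateau $\Psi(z_{h})\prod_{j\neq h}\eta_{R}(z_{j})$ with $R\to\infty$ avoids that residual dependence entirely: on the plateau the test function equals $\Psi(z_{h})$ exactly, so $\La(|v|)$ and the weights $|v|^{-2}$, $|v|^{-4}$ are already evaluated at $|\Psi|$, and the fixed-width transition layer contributes only $O(R^{n-2})$ against the $R^{n-1}$ normalization. This buys a one-step limit without the contradiction argument, at the price of some bookkeeping of the layer geometry; both routes use \eqref{eq:condL} in the same places and both end with the same Fubini argument on null sets to pass from almost every $x\in\Om$ to almost every slice $\om(y_{h})$.
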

\begin{proof}
\textit{Sufficiency.}
Suppose $r\geq 0$. The strict $L^{\Phi}$-dissipative can be written as
\begin{gather*}
    \Re  \sum_{h=1}^{n}\int_{\R^{n-1}}dy_{h}\int_{\om(y_{h})}
    \lan \A^{h}\de_{h}u, \de_{h}(\vf(|u|) u))\ran dx_{h} \\
    \geq \kappa  \sum_{h=1}^{n}\int_{\R^{n-1}}dy_{h}\int_{\om(y_{h})} |\de_{h}
    (\sqrt{\vf(|u|)}\, u)|^{2} dx_{h}
\end{gather*}
for any  $u\in (\Cspt^{1}(\Om))^{m}$.
 By assumption,
there exists $\kappa >0$ such that
 \begin{gather*}
 \Re \int_{\om(y_{h})}
    \lan \A^{h}(x)v'(x_{h}), 
    (\vf(|v(x_h)|)v(x_{h}))'\ran dx_{h} \\
    \geq \kappa \int_{\om(y_{h})}
    | ((\sqrt{\vf(|v(x_{h})|)}\, v(x_{h}))'|^{2} dx_{h}
\end{gather*}
   for almost every $y_{h}\in \R^{n-1}$ and
   for any $v\in(\Cspt^{1}(\om(y_{h})))^{m}$,
   provided $\om(y_{h})
   \neq \emptyset$ ($h=1,\ldots,n$).   This implies
    \begin{equation}\label{eq:repl}
 \Re \sum_{h=1}^{n}\int_{\Om}\lan \A^{h}(x)\de_{h}u,
        \de_{h}(\vf(|u|) u)\ran dx \geq \kappa
        \int_{\Om}  |\nabla
    (\sqrt{\vf(|u|)}\, u)|^{2} dx
\end{equation}
for any $u\in (\Cspt^{1}(\Om))^{m}$, and the strict $L^{\Phi}$-dissipativity is proved.
The proof for $-1<r<0$ runs in the same way. We have just to replace 
\eqref{eq:repl} by 
\begin{equation}\label{eq:sameway}
\Re  \sum_{h=1}^{n} \int_{\Om} \lan (\A^{h})^{*} \de_{h} w, \de_{h}(\psi(|w|)\, w)\ran\, dx \geq 
\kappa \int_{\Om} |\nabla (\sqrt{\psi(|w|)}\, w)|^{2} dx
\end{equation}
for any $w\in  (\Cspt^{1}(\Om))^{m}$.

\textit{Necessity.} 
Let $r\geq 0$. First suppose that $\A^h$ are constant matrices and $\Om=\R^n$.
Fix $1\leq k\leq n$. Let $\al\in(\Cspt^{1}(\R))^{m}$ and
	$\be$ be a real valued scalar function in $\Cspt^{1}(\R^{n-1})$. Consider
	$$
	u_{\ep}(x)=\al(x_{k}/\ep)\, \be(y_{k}). 
	$$

By assumption, there exists $\kappa>0$ such that
\begin{equation}\label{eq:ass}
\Re  \sum_{h=1}^{n} \int_{\Om} \lan \A^{h} \de_{h} u_{\ep}, \de_{h}(\vf(|u_{\ep}|)\, u_{\ep})\ran\, dx \geq \kappa 
        \int_{\Om}  |\nabla
    (\sqrt{\vf(|u_{\ep}|)}\, u_{\ep})|^{2} dx
.
\end{equation}

Keeping in mind that $\be$ is a scalar real valued function, if $h\neq k$ we  have
\begin{gather*}
 \de_{h}(\vf(|u_{\ep}|)\, u_{\ep}) =
 g(|u_{\ep}|)\, \de_{h} u_{\ep}\, ,
\end{gather*}
where $g(s)= (s \vf(s))'$, and
\begin{gather*}
\de_{h}(\sqrt{\vf(|u_{\ep}|)}\, u_{\ep}) =
\widetilde{g}(|u_{\ep}|)\, \de_{h}  u_{\ep}\, ,
\end{gather*}
where $\widetilde{g}(s)= (s \sqrt{\vf(s)})'$.

If $h=k$ we can write
$$
\de_{k}(\vf(|u_{\ep}|)\, u_{\ep}) = \ep^{-1}\, \de_{t}\ga(x_{k}/\ep,\be(y_{k}))\, \be(y_{k}),
$$
where $\ga(t,s)=\vf(| \al(t)\, s|) \,\al(t)$, and
$$
\de_{k}(\sqrt{\vf(|u_{\ep}|)}\, u_{\ep}) = \ep^{-1}\, \de_{t}\widetilde{\ga}(x_{k}/\ep,\be(y_{k}))\, \be(y_{k}),
$$
where $\widetilde{\ga}(t,s)=\sqrt{\vf(| \al(t)\, s|)} \,\al(t)$.

Therefore inequality \eqref{eq:ass} can be written as

\begin{gather*}
	 \ep^{-2} \Re \int_{\R^{n-1}}\be^{2}(y_{k})dy_{k}
	 \int_{\R}\lan \A^{k}
	 \al'(x_{k}/\ep), 
	 \de_{t}\ga(x_{k}/\ep,\be(y_{k}))\ran 
	 \, dx_{k}
	 \\
	 + 
	 \sum_{\genfrac{}{}{0 pt}{}{h=1}{h\neq k}}^{n}
	 \Re \int_{\R^{n-1}} (\de_{h}\be(y_{k}))^{2}dy_{k}
	  \int_{\R}  \lan \A^{h}\al(x_{k}/\ep),\al(x_{k}/\ep)\ran \,g[|\al(x_{k}/\ep)\, \be(y_{k})|] dx_{k}
	 \\
	 \geq \kappa\,
	 \ep^{-2}\int_{\R^{n-1}}\be^{2}(y_{k})dy_{k}
	 \int_{\R} | \de_{t}\widetilde{\ga}(x_{k}/\ep,\be(y_{k}))|^{2} \,
	 dx_{k}
	 \\
	 + 
	 \kappa  \sum_{\genfrac{}{}{0 pt}{}{h=1}{h\neq k}}^{n}
	 \int_{\R^{n-1}} (\de_{h}\be(y_{k}))^{2}dy_{k}
	  \int_{\R} |\al(x_{k}/\ep)|^{2}  \left(\widetilde{g}\left(\left| \al(x_{k}/\ep) \be(y_{k}\right| \right)\right)^{2}dx_{k} ,
	  \end{gather*}
	  i.e.
	 \begin{gather*}
	 \ep^{-1}\Re \int_{\R^{n-1}}\be^{2}(y_{k})dy_{k}
	 \int_{\R}\lan \A^{k}
	 \al'(t), 
	 \de_{t}\ga(t,\be(y_{k}))\ran 
	 \, dt
		  \cr
		  + \ep   \sum_{\genfrac{}{}{0 pt}{}{h=1}{h\neq k}}^{n}
		 \Re  \int_{\R^{n-1}}(\de_{h}\be(y_{k}))^{2}dy_{k}
		  \int_{\R} \Re \lan \A^{h}\al(t),\al(t)\ran 
		  \, g[|\al(t)\, \be(y_{k})|]  dt
		  \\
		  \geq \kappa\, \ep^{-1}
		 \int_{\R^{n-1}}\be^{2}(y_{k})dy_{k}
	 \int_{\R} | \de_{t}\widetilde{\ga}(t,\be(y_{k}))|^{2} \,
	 dt
	 	 \\
	 +  \kappa\, \ep 
	 \sum_{\genfrac{}{}{0 pt}{}{h=1}{h\neq k}}^{n}
	 \int_{\R^{n-1}} (\de_{h}\be(y_{k}))^{2}dy_{k}
	  \int_{\R} |\al(t)|^{2}  \left(\widetilde{g}\left(\left| \al(t) \be(y_{k}\right| \right)\right)^{2}dt\, .
\end{gather*}
    Letting $\ep \to 0^{+}$  we get
    \begin{equation}\label{eq:newdis}
\begin{gathered}
\Re \int_{\R^{n-1}}\be^{2}(y_{k})dy_{k}
	 \int_{\R}\lan \A^{k}
	 \al'(t), 
	 \de_{t}\ga(t,\be(y_{k}))\ran 
	 \, dt \\
		   \geq \kappa
		   \int_{\R^{n-1}}\be^{2}(y_{k})dy_{k}
	 \int_{\R} | \de_{t}\widetilde{\ga}(t,\be(y_{k}))|^{2} \,
	 dt\,  .
\end{gathered}
\end{equation}

We claim that from the arbitrariness of $\be$ it follows
\begin{equation}\label{eq:conuno}
\Re		 \int_{\R} \lan \A^{k}
	 \al'(t), 
	 \de_{t}\ga(t,1))\ran 
	 \, dt \\
		   \geq \kappa
	  \int_{\R} | \de_{t}\widetilde{\ga}(t,1)|^{2} \,
	 dt
		    .
\end{equation}

We prove \eqref{eq:conuno} by contradiction. 
Set
$$
\Phi(s)= 
		  \int_{\R} \left[ \Re \lan \A^{k}
	 \al'(t), 
	 \de_{t}\ga(t,s))\ran 
		    - \kappa\,		 
	  | \de_{t}\widetilde{\ga}(t,s)|^{2} \right]dt
$$
and suppose $\Phi(1)<0$. 
Take $\be_{\ep}\in \Cspt^{1}(\R^{n-1})$ such that $0\leq \be_{\ep} \leq 1$, 
$\be_{\ep}(y_{k})=1$ on the unit ball $B_{1}(0)$ and
$\spt \be_{\ep} \subset B_{1+\ep}(0)$.
Since $\Phi(1)<0$ and keeping in mind \eqref{eq:newdis}, we can write
\begin{gather*}
0\leq \int_{\R^{n-1}} \be_{\ep}^{2}(y_{k}) \Phi( \be_{\ep}(y_{k})) dy_{k}\\
  =\int_{\R^{n-1}} \be_{\ep}^{2}(y_{k}) [ \Phi( \be_{\ep}(y_{k})) - \Phi(1)] dy_{k} +
  \Phi(1)  \int_{\R^{n-1}} \be_{\ep}^{2}(y_{k})  dy_{k}\\
  \leq \int_{B_{1+\ep}(0)\setminus B_{1}(0)}\be_{\ep}^{2}(y_{k}) [ \Phi( \be_{\ep}(y_{k})) - \Phi(1)] dy_{k} +
  \Phi(1)\, | B_{1}(0)|\, .
\end{gather*}
Letting $\ep\to 0^{+}$, we get $0\leq  \Phi(1)\, | B_{1}(0)| <0$. This is a contradiction and \eqref{eq:conuno} is proved. We then have
$$
\Re   \int_{\R}\lan \A^{k}
		  \al'(t), 
		 (|\vf(|\al(t)|) \al(t))' \ran 
		  \, dt \geq \kappa
		  \int_{\R} |  (\sqrt{\vf(|\al(t)|)}\, \al(t))'  |^{2} dt
$$
for any $\al\in(\Cspt^{1}(\R))^{m}$. This means that $A(y_{k})$ is strict $L^{\Phi}$-dissipative.

If $\A^{h}$ are not necessarily constant and defined in $\Om$,
we observe that, thanks to Lemma \ref{le:lemma2bis},  $A$ is strict $L^{\Phi}$-dissipative if and
only if condition \eqref{eq:cond1bis} holds.

Consider
    $$
    v(x)=w((x-x_{0})/\ep)
    $$
    where $x_{0}\in\Om$ is a fixed point, $w\in(\Cspt^{1}(\R^{n}))^{m}$,
    $\spt  w \subset B_{1}(0)$ and
    $0<\ep<\dist(x_{0},\de\Om)$.
    Putting this particular $v$ in \eqref{eq:cond1bis} we get
    \begin{gather*}
	\Re \int_{\Om} \Big(\lan \A^{h}(x) \de_h	 w((x-x_{0})/\ep), \de_h w((x-x_{0})/\ep)\ran 
 \\ 
 -
\La^{2}(|w((x-x_{0})/\ep)|)\, |w((x-x_{0})/\ep)|^{-4} \lan \A^{h}(x) w((x-x_{0})/\ep),w((x-x_{0})/\ep)\ran \\
 \times (\Re \lan w((x-x_{0})/\ep), \de_{h} w((x-x_{0})/\ep)\ran)^{2}
 \\
 +
\La(|w((x-x_{0})/\ep)|) 
 |w((x-x_{0})/\ep)|^{-2} \\ \times
 \lan (\A^{h}(x)- (\A^{h})^*(x))w((x-x_{0})/\ep), \de_h w((x-x_{0})/\ep) \ran 
 \\  \times
\Re \lan w((x-x_{0})/\ep), \de_{h} w((x-x_{0})/\ep)\ran 
\Big) dx \\
\geq \kappa \int_{\Om} |\nabla w((x-x_{0})/\ep)|^{2} dx
\end{gather*}
and then
\begin{gather*}
 \Re \int_{\Om} \Big(\lan \A^{h}(x_{0}+\ep y) \de_h w(y), \de_h w(y)\ran 
 \\  
  - 
\La^{2}(|w(y)|)\, |w(y)|^{-4} \lan \A^{h}(x_{0}+\ep y) w(y),w(y)\ran
(\Re \lan w(y), \de_{h} w(y)\ran)^{2}\\ 
 +
\La(|w((y)|) 
 |w(y)|^{-2}
 \lan (\A^{h}(x_{0}+\ep y)- (\A^{h})^*(x_{0}+\ep y))w(y), \de_h w((y) \ran 
 \\  \times
\Re \lan w(y), \de_{h} w(y)\ran 
\Big) dy   
\geq \kappa \int_{\Om} |\nabla w(y)|^{2} dy
\, .
\end{gather*}

	Letting $\ep\to 0^{+}$, we obtain
	\begin{gather*}
\Re \int_{\R^{n}}\Big( \lan \A^{h}(x_{0}) \de_{h}w,\de_{h}w\ran
		 - \La^{2}(|w|)\|w|^{-4} \lan \A^{h}(x_{0}) w,w\ran (\Re \lan 
		 w,\de_{h}w\ran)^{2} \cr
		 +  \La^{2}(|w|)\ |w|^{-2}(\lan \A^{h}(x_{0}) w,\de_{h}w\ran
		- \lan \A^{h}(x_{0}) \de_{h}w,w\ran)\Re \lan w,\de_{h} w\ran
		  \Big) dy \\
		 \geq \kappa \int_{\Om} |\nabla w(y)|^{2} dy
		 \end{gather*}
	for almost every $x_{0}\in\Om$. 
	
	Because of the arbitrariness of 
	$w \in(\Cspt^{1}(\R^{n}))^{m}$, Lemma \ref{le:lemma2bis} shows that the constant
	coefficient operator $\de_{h}(\A^{h}(x_{0})\de_{h})$ is 
	strict $L^{\Phi}$-dissipative. From what has already been proved, 
	the ordinary differential operators $(\A^{h}(x_{0})v')'$ are strict
	$L^{\Phi}$-dissipative ($h=1,\ldots,n$).
	
	By Corollary \ref{co:equivstrict} we have
	\begin{equation}\label{condnewstrict}
	\begin{gathered}
 \Re \lan \A^{h}(x_{0}) \la,\la\ran -\La_{\infty}^{2}\Re\lan 
	  \A^{h}(x_{0})\om,\om\ran (\Re \lan\la,\om\ran)^{2}
	   \cr
	   + \La_{\infty}
	    \Re(\lan \A^{h}(x_{0})\om,\la\ran -
	    \lan \A^{h}(x_{0})\la,\om\ran)
	       \Re \lan \la,\om\ran   \geq \kappa'\ |\la|^{2}
	       \end{gathered}
\end{equation}
	for any $\la,\om\in\C^{m}$, $|\om|=1$, $h=1,\ldots,n$, where $\kappa'=(1-\La_{\infty}^{2})\kappa$.

	Reasoning as in \cite[p.261]{CM2006} we find
	that, for almost every $y_{h}\in \R^{n-1}$, we
	have
	\begin{gather*}
 \Re \lan \A^{h}(x) \la,\la\ran -\La_{\infty}^{2}\Re\lan 
	  \A^{h}(x)\om,\om\ran (\Re \lan\la,\om\ran)^{2}
	   \cr
	   + \La_{\infty}
	    \Re(\lan \A^{h}(x)\om,\la\ran -
	    \lan \A^{h}(x)\la,\om\ran)
	       \Re \lan \la,\om\ran   \geq \kappa'\ |\la|^{2}
	       \end{gather*}
	for almost every $x_{h}\in \om(y_{h})$ and 
	for any $\la,\om\in\C^{m}$, $|\om|=1$, provided
	$\om(y_{h}) \neq \emptyset$.
	The conclusion follows from Corollary \ref{co:equivstrict}.
    
     If $-1<r<0$ the proof runs in the same way, recalling \eqref{eq:sameway}.
\end{proof}

We are now in a position to prove the necessary and sufficient algebraic condition
for the strict $L^{\Phi}$-dissipativity of $A$.

\begin{theorem}\label{thx:3strict}
    Let us assume condition \eqref{eq:condL}.  The operator \eqref{eq:47} is strict $L^{\Phi}$-dis\-si\-pa\-tive 
    if and only if there exists $\kappa'>0$ such that
    \eqref{condnewstrict} holds for almost every $x_{0}\in\Om$ and for any $\la,\om\in\C^{m}$, $|\om|=1$,
    $h=1,\ldots,n$.
\end{theorem}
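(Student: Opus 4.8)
The plan is to obtain the statement by assembling the two results that immediately precede it: the slicewise reduction of Lemma \ref{lemma:5bis} and the one-dimensional algebraic characterization of Corollary \ref{co:equivstrict}. First I would invoke Lemma \ref{lemma:5bis}: under condition \eqref{eq:condL}, the strict $L^{\Phi}$-dissipativity of the operator \eqref{eq:47} is equivalent to the \emph{uniform} strict $L^{\Phi}$-dissipativity, for almost every $y_{h}\in\R^{n-1}$ and every $h=1,\ldots,n$, of the ordinary differential operators $(\A^{h}(x)\,v')'$ obtained by freezing the transverse variables and letting the remaining variable $x_{h}$ run over $\om(y_{h})$. This transfers the whole problem from the $n$-dimensional setting to the finite family of one-dimensional operators acting along the coordinate axes.

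Next I would apply Corollary \ref{co:equivstrict} to each of these frozen one-dimensional operators. That corollary states that, under \eqref{eq:condL}, an ordinary operator with coefficient matrix $\A^{h}$ is strict $L^{\Phi}$-dissipative precisely when there is a $\kappa>0$ for which the quadratic-form inequality in its statement holds for almost every point and all $\la,\om\in\C^{m}$ with $|\om|=1$. Written for the matrix $\A^{h}(x_{0})$, that inequality is verbatim \eqref{condnewstrict}, so each coordinate direction $h$ contributes exactly a condition of the required algebraic form, with its own dissipativity constant.

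The one genuinely non-cosmetic point is the bookkeeping of that constant. The \emph{uniform} strict dissipativity supplied by Lemma \ref{lemma:5bis} yields, for each fixed $h$, a single $\kappa$ independent of the base point $y_{h}$; taking the minimum over the finitely many values $h=1,\ldots,n$ then produces one constant $\kappa'=(1-\La_{\infty}^{2})\min_{h}\kappa$ valid simultaneously in all directions, in agreement with the normalization already recorded in the necessity part of Lemma \ref{lemma:5bis}. Conversely, if \eqref{condnewstrict} holds with a common $\kappa'$, the same corollary makes every frozen operator strict $L^{\Phi}$-dissipative with a uniform constant, and Lemma \ref{lemma:5bis} returns the strict $L^{\Phi}$-dissipativity of $A$.

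I expect the main obstacle, such as it is, to lie not in any new analysis but in the measure-theoretic passage between the pointwise statement ``for almost every $x_{0}\in\Om$'' appearing in the theorem and the slicewise statement ``for almost every $x_{h}\in\om(y_{h})$ and almost every $y_{h}$'' delivered by the reduction. This is handled by a Fubini/Lebesgue-point argument exactly as in \cite[p.261]{CM2006}, and in fact it was already carried out when \eqref{condnewstrict} was derived inside the proof of Lemma \ref{lemma:5bis}. Once that identification is made explicit, the theorem is a direct combination of Lemma \ref{lemma:5bis} and Corollary \ref{co:equivstrict}, and no further computation is needed.
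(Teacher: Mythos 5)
Your proposal is correct and follows essentially the same route as the paper: the necessity is absorbed into the necessity part of Lemma \ref{lemma:5bis} (where \eqref{condnewstrict} is derived via Corollary \ref{co:equivstrict} and the Fubini/Lebesgue-point argument of \cite[p.261]{CM2006}), and the sufficiency is the combination of Corollary \ref{co:equivstrict} with Lemma \ref{lemma:5bis} exactly as you describe.
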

\begin{proof}
  \textit{Necessity.} This has been already proved in the necessity
    part of  Lemma \ref{lemma:5bis}. 
    
    \textit{Sufficiency.} 
   We know that if \eqref{condnewstrict} holds for
    almost every $x_{0}\in\Om$ and
    for any $\la,\om\in\C^{m}$, $|\om|=1$,  
     the ordinary differential
    operators $A(y_{h})$ are uniformly strict $L^{\Phi}$-dissipative 
    for almost every $y_{h}\in \R^{n-1}$, provided
    $\om(y_{h})\neq \emptyset$
    ($h=1,\ldots,n$). By Lemma \ref{lemma:5bis},  $A$ is 
    strict $L^{\Phi}$-dissipative.    
\end{proof}

Similar results hold for the $L^{\Phi}$-dissipativity. 

\begin{lemma}
    The operator \eqref{eq:47} is $L^{\Phi}$-dissipative if and only if
    the ordinary differential operators
   \begin{equation}\label{eq:ODops}
A(y_{h})[u(x_{h})]=d(\A^{h}(x)du/dx_{h})/dx_{h}
\end{equation}
    are $L^{\Phi}$-dissipative in $\om(y_{h})$
    for almost every $y_{h}\in \R^{n-1}$
    ($h=1,\ldots,n$). This condition is void if 
    $\om(y_{h})=\emptyset$.
\end{lemma}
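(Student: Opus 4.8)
The plan is to transcribe the proof of Lemma \ref{lemma:5bis} with every strict inequality replaced by its non-strict counterpart, using Lemma \ref{le:lemma1} in place of Lemma \ref{le:lemma2bis}, Lemma \ref{lem:1} in place of the strict one-dimensional criterion, and Theorem \ref{th:1} in place of Corollary \ref{co:equivstrict}. First I would specialize Lemma \ref{le:lemma1} to the operator \eqref{eq:47}: since here $\A^{hk}=\delta_{hk}\A^{h}$, the cross terms in \eqref{eq:cond1} disappear and the criterion reduces to the left-hand side of \eqref{eq:cond1bis} being $\geq 0$ (that is, \eqref{eq:cond1bis} with its right-hand side replaced by $0$). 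The structural fact driving everything is that the $h$-th summand of this integrand depends on $v$ only through $\de_{h}v$; writing $\int_{\Om}=\int_{\R^{n-1}}dy_{h}\int_{\om(y_{h})}dx_{h}$ by Fubini, the inner integral is exactly the one-dimensional expression \eqref{eq:condord1} for the matrix $\A^{h}$ in the variable $x_{h}$, evaluated on the slice $x_{h}\mapsto v(x)$.

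Sufficiency is then immediate. If each $A(y_{h})$ is $L^{\Phi}$-dissipative for almost every $y_{h}$, then by Lemma \ref{lem:1} applied to the slice $v(\cdot,y_{h})\in[\Cspt^{1}(\om(y_{h}))]^{m}$ each inner integral is non-negative for almost every $y_{h}$; integrating in $y_{h}$ and summing over $h$ yields the specialized condition, so $A$ is $L^{\Phi}$-dissipative by Lemma \ref{le:lemma1}. The distinction $r\geq0$ versus $-1<r<0$ needs no separate treatment here, since Lemma \ref{le:lemma1} already incorporates it.

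Necessity is the substantial part, and I would follow the necessity argument of Lemma \ref{lemma:5bis} but with $\kappa=0$. Assuming first constant coefficients and $\Om=\R^{n}$, I fix $k$, insert $u_{\ep}(x)=\al(x_{k}/\ep)\,\be(y_{k})$ (with $\be$ scalar and real) into the dissipativity inequality, and let $\ep\to0^{+}$: the contributions with $h\neq k$ scale like $\ep$ and vanish, while the $h=k$ contribution survives, isolating a one-dimensional inequality weighted by $\be^{2}(y_{k})$. The arbitrariness-of-$\be$ contradiction argument that produced \eqref{eq:conuno} in Lemma \ref{lemma:5bis} then reduces matters to $\be\equiv1$ and yields the $L^{\Phi}$-dissipativity of the ordinary operator $(\A^{k}u')'$. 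For variable coefficients I localize with $v(x)=w((x-x_{0})/\ep)$ in the specialized condition and send $\ep\to0^{+}$, obtaining that the constant-coefficient operator $\de_{h}(\A^{h}(x_{0})\de_{h})$ is $L^{\Phi}$-dissipative for almost every $x_{0}$, whence, by the constant-coefficient step, $(\A^{h}(x_{0})u')'$ is $L^{\Phi}$-dissipative for almost every $x_{0}$.

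The hard part, and the only genuinely delicate point, is the measure-theoretic descent from ``almost every $x_{0}\in\Om$'' to ``the ODE operator is $L^{\Phi}$-dissipative in $\om(y_{h})$ for almost every $y_{h}$''. Here I would pass through the algebraic characterization: by Theorem \ref{th:1}, dissipativity of $(\A^{h}(x_{0})u')'$ is equivalent to the pointwise condition \eqref{eq:condode} for $\A^{h}(x_{0})$, which therefore holds for almost every $x_{0}\in\Om$; by Fubini this means that for almost every $y_{h}$ the condition \eqref{eq:condode} holds for almost every $x_{h}\in\om(y_{h})$, and a final application of Theorem \ref{th:1} re-reads this as $L^{\Phi}$-dissipativity of the ODE operator in $\om(y_{h})$ for almost every $y_{h}$. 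Unlike the strict case, condition \eqref{eq:condL} is nowhere needed, precisely because Theorem \ref{th:1}, not Corollary \ref{co:equivstrict}, drives the equivalence; and the range $-1<r<0$ is handled by the rewriting of Remark \ref{rem:1}, exactly as at the close of Lemma \ref{lemma:5bis}.
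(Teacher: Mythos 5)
Your proposal is correct and follows essentially the same route as the paper, whose entire proof is the instruction to rerun the argument of Lemma \ref{lemma:5bis} with $\kappa=0$ and to observe that condition \eqref{eq:condL} is no longer needed. You have simply made explicit the substitutions that instruction entails (Lemma \ref{le:lemma1} for Lemma \ref{le:lemma2bis}, Lemma \ref{lem:1} for the strict one-dimensional criterion, Theorem \ref{th:1} for Corollary \ref{co:equivstrict}), including the correct reason why \eqref{eq:condL} drops out.
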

\begin{proof}
The proof runs as the proof of Lemma \ref{lemma:5bis}, in which we set $\kappa=0$.
We note that here we do not need to assume condition  \eqref{eq:condL}.
\end{proof}

\begin{theorem}\label{thx:3}
    The operator \eqref{eq:47} is $L^{\Phi}$-dissipative 
    if and only if
    $$
	\begin{gathered}
 \Re \lan \A^{h}(x_{0}) \la,\la\ran -\La_{\infty}^{2}\Re\lan 
	  \A^{h}(x_{0})\om,\om\ran (\Re \lan\la,\om\ran)^{2}
	   \cr
	   + \La_{\infty}
	    \Re(\lan \A^{h}(x_{0})\om,\la\ran -
	    \lan \A^{h}(x_{0})\la,\om\ran)
	       \Re \lan \la,\om\ran   \geq 0
	       \end{gathered}
$$
 holds for almost every $x_{0}\in\Om$ and for any $\la,\om\in\C^{m}$, $|\om|=1$,
    $h=1,\ldots,n$.
\end{theorem}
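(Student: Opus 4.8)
The plan is to mirror the proof of Theorem \ref{thx:3strict}, but with the dissipativity constant set to $\kappa=0$, so that both the reduction to ordinary differential operators and the algebraic characterisation become available off the shelf. First I would invoke the Lemma immediately preceding this Theorem, which asserts that the operator \eqref{eq:47} is $L^{\Phi}$-dissipative if and only if, for each $h=1,\ldots,n$, the ordinary differential operators $A(y_{h})$ defined in \eqref{eq:ODops} are $L^{\Phi}$-dissipative in $\om(y_{h})$ for almost every $y_{h}\in\R^{n-1}$ (the condition being void when $\om(y_{h})=\emptyset$). As remarked in the proof of that Lemma, condition \eqref{eq:condL} is not needed here, which is why $\La_{\infty}$ may appear without the restriction $\La_{\infty}^{2}<1$.

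The next step is to apply Theorem \ref{th:1} to each of these one-dimensional operators. The operator $A(y_{h})$ has coefficient matrix $x_{h}\mapsto \A^{h}(x)$ on the interval $\om(y_{h})$, so Theorem \ref{th:1} tells us that $A(y_{h})$ is $L^{\Phi}$-dissipative if and only if inequality \eqref{eq:condode}, written with $\A^{h}$ in place of $\A$, holds for almost every $x_{h}\in\om(y_{h})$ and for every $\la,\om\in\C^{m}$ with $|\om|=1$. This inequality is verbatim the algebraic condition in the statement.

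It then remains to splice the two almost-everywhere quantifiers together. Combining the two equivalences, $A$ is $L^{\Phi}$-dissipative precisely when, for each $h$, the algebraic inequality holds for almost every $y_{h}\in\R^{n-1}$ and almost every $x_{h}\in\om(y_{h})$, for all admissible $\la,\om$. Since $x=(y_{h},x_{h})$ up to a permutation of coordinates, a Fubini argument identifies ``almost every $y_{h}$ and almost every $x_{h}\in\om(y_{h})$'' with ``almost every $x_{0}\in\Om$'', giving the stated condition for each $h$. The only point requiring a little care---and the closest thing to an obstacle---is the interchange of the quantifier ``for all $\la,\om$'' with ``for almost every $x_{0}$'': because the left-hand side of \eqref{eq:condode} is continuous in $(\la,\om)$ and the set $\{(\la,\om)\in\C^{m}\times\C^{m}:|\om|=1\}$ is separable, validity for a fixed dense family of pairs on a full-measure set of $x_{0}$ extends, by continuity, to all pairs on a common full-measure set, so the two orderings of the quantifiers coincide. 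This completes the proof.
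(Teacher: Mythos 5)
Your proposal is correct and follows essentially the same route as the paper, which simply adapts the proof of Theorem \ref{thx:3strict} with $\kappa=0$: reduce to the ordinary differential operators $A(y_h)$ via the preceding Lemma, apply the one-dimensional criterion of Theorem \ref{th:1} on each $\om(y_h)$, and recombine the almost-everywhere quantifiers. Your explicit separability/continuity argument for interchanging ``for all $\la,\om$'' with ``for almost every $x_0$'' is a detail the paper leaves implicit, but it is the standard and correct way to justify that step.
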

\begin{proof}
 The proof is similar to the proof of Theorem \ref{thx:3strict}.
\end{proof}

In the case of a real coefficient operator \eqref{eq:47}, we have also

\begin{theorem}
    Let $A$ be the operator \eqref{eq:47}, where
$\A^{h}$ are real matrices $\{a^{h}_{ij}\}$ with $i,j=1,\ldots,m$.
 Let us suppose $\A^{h}=(\A^{h})^{t}$ and  $\A^{h}> 0$
 ($h=1,\ldots,n$).
 The operator $A$ is 
    $L^{\Phi}$-dissipative if and only if
   \begin{equation}
	 \La^{2}_{\infty} (\mu_{1}^{h}(x)
	   +\mu_{m}^{h}(x))^{2} \leq 
	  4\, \mu_{1}^{h}(x)\,
	   \mu_{m}^{h}(x)
       \label{ineigen}
   \end{equation}
    for almost every $x\in\Om$, $h=1,\ldots,n$,  
    where $\mu_{1}^{h}(x)$ and $\mu_{m}^{h}(x)$ are the 
    smallest and the largest 
    eigenvalues of the matrix $\A^{h}(x)$ respectively. 
    In the particular 
    case $m=2$ this condition is equivalent to
    $$
     \La^{2}_{\infty}(\tr \A^{h}(x))^{2}  \leq 4\, \det 
    \A^{h}(x) 
    $$
    for almost every $x\in\Om$, $h=1,\ldots,n$.
\end{theorem}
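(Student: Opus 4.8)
The plan is to reduce the claim, via the pointwise criterion of Theorem \ref{thx:3}, to the real-symmetric eigenvalue analysis already carried out for a single matrix in Theorem \ref{th:4}. By Theorem \ref{thx:3}, the operator \eqref{eq:47} is $L^{\Phi}$-dissipative if and only if, for each fixed $h=1,\ldots,n$, the algebraic inequality displayed there holds for almost every $x\in\Om$ and for all $\la,\om\in\C^m$ with $|\om|=1$. For a fixed $h$ and a fixed $x$, that inequality is exactly condition \eqref{eq:condode} of Theorem \ref{th:1} written for the single matrix $\A^h(x)$. Hence the problem decouples into $n$ independent pointwise conditions, one per coefficient matrix, and it suffices to characterize \eqref{eq:condode} for a real symmetric positive definite matrix at a single point.

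First I would use that $\A^h(x)$ is real and symmetric to simplify \eqref{eq:condode}. A short computation shows that for such a matrix $\Re(\lan\A^h(x)\om,\la\ran-\lan\A^h(x)\la,\om\ran)=0$, so the middle term vanishes; then, exactly as in the proof of Theorem \ref{th:4} (following \cite[Theorem 5, p.255]{CM2006}), the condition over complex $\la,\om$ is equivalent to its restriction to real vectors, so that \eqref{eq:condode} becomes
$$
\lan\A^h(x)\xi,\xi\ran-\La_\infty^2\lan\A^h(x)\om,\om\ran(\lan\xi,\om\ran)^2\geq 0
$$
for all $\xi,\om\in\R^m$ with $|\om|=1$. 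Using $\A^h(x)>0$, I would diagonalize the matrix and optimize over $\om$ in its eigenbasis; as in \cite[Lemma 4, p.253]{CM2006}, the extremal configuration involves only the extreme eigenvalues $\mu_1^h(x)$ and $\mu_m^h(x)$ and gives $\La_\infty^2(\mu_1^h(x)+\mu_m^h(x))^2\leq 4\,\mu_1^h(x)\mu_m^h(x)$. Performing this at almost every $x$ and for every $h$ yields \eqref{ineigen}.

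This argument is essentially a splicing of Theorem \ref{thx:3} with the computation inside Theorem \ref{th:4}, so I do not expect a genuinely new difficulty; the only point needing care is that Theorem \ref{th:4} was phrased for a matrix defined on an interval, whereas here $\A^h$ is defined on $\Om\subset\R^n$. Since the criterion \eqref{eq:condode} and its eigenvalue reformulation are purely pointwise in $x$, the reduction transfers verbatim to each $\A^h(x)$. Finally, for $m=2$ I would substitute the identities $\mu_1^h(x)\mu_m^h(x)=\det\A^h(x)$ and $\mu_1^h(x)+\mu_m^h(x)=\tr\A^h(x)$ into \eqref{ineigen} to obtain the equivalent form $\La_\infty^2(\tr\A^h(x))^2\leq 4\,\det\A^h(x)$.
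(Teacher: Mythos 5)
Your proposal is correct and follows essentially the same route as the paper: reduce via Theorem \ref{thx:3} to the $n$ pointwise algebraic conditions, observe that for real symmetric $\A^h(x)$ the antisymmetric term drops out, and then invoke the eigenvalue computation from the proof of Theorem \ref{th:4} (i.e.\ the arguments of \cite{CM2006}) at each point, with the $m=2$ case handled by the trace--determinant identities. No substantive differences.
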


\begin{proof}
    By Theorem \ref{thx:3}, $A$ is $L^{\Phi}$-dissipative if and only 
    if 
    $$
    \lan \A^{h}(x) \la,\la\ran-
     \La^{2}_{\infty}\lan \A^{h}(x) \om,\om\ran (\Re\lan \la,\om\ran)^{2} \geq 0
 $$
  for almost every $x\in\Om$, for any $\la,\om\in\C^{m}$, $|\om|=1$, 
  $h=1,\ldots,n$.  
    The proof of Theorem \ref{th:4} shows that
  these conditions are equivalent to \eqref{ineigen}.
\end{proof}

\section{Functional ellipticity}\label{sec:ellipt}

In \cite{DLP} \textsc{Dindo\v{s}, Li} and \textsc{Pipher} have introduced 
different  notions of $p$-ellipticity, establishing relationships between them.
Here we extend these definitions in the frame of functional ellipticity, introduced in 
\cite{CM2021} for scalar operators.

Coming back to the general operator \eqref{eq:A}, we say that the tensor $\{a^{hk}_{ij}(x)\}$ satisfies the strong $\Phi$-ellipticity condition
if there exists $\kappa >0$ such that
\begin{equation}\label{eq:strongphi}
\begin{gathered}
\Re \lan \A^{hk}(x)\xi_k, \xi_h \ran -
\La^{2}(t) \Re \lan (\A^{hk}(x)-(\A^{kh})^*(x))\om,\xi_h \ran \Re \lan \om, \xi_k\ran 
 \\
 + \La(t) \lan  \A^{hk}(x)\om, \om \ran  \Re \lan \om, \xi_k \ran  \Re \lan \om, \xi_h\ran
 \geq \kappa |\xi|^2
\end{gathered}
\end{equation}
for any $\xi_{h}, \om\in \C^m$, $|\om|=1$, $t>0$ and for almost every $x\in\Om$.
%Here $|\xi|^2 = \sum_{h=1}^{n} |\xi_h|^{2}$.

We say that the tensor $\{a^{hk}_{ij}(x)\}$ satisfies the integral $\Phi$-ellipticity condition
if there exists $\kappa>0$ such that condition \eqref{eq:cond1str} holds for any
$v \in [\Cspt^{1}(\Om)]^m$.  We note that if there are no lower order terms, as in
the case we are considering here, the concepts of strong dissipativity and integral
ellipticity are equivalent, thanks to Lemma \ref{le:lemma2}. This is not the case if there are lower order terms. The $\Phi$-ellipticity 
is still given by  \eqref{eq:cond1str}, while the formula for $\Phi$-dissipativity has to be changed,
taking into account the lower order terms.

It is trivial that strong $\Phi$-ellipticity implies integral $\Phi$-ellipticity. Indeed, 
\eqref{eq:strongphi} implies that the integrand in  \eqref{eq:cond1str} is non-negative almost
everywhere.

We say that the tensor $\{a^{hk}_{ij}(x)\}$  satisfies the Legendre-Hadamard $\Phi$-ellipticity condition
(or weak $\Phi$-ellipticity condition) if there exixts $\kappa>0$ such that 
\begin{equation}\label{eq:weakphi}
\begin{gathered}
  \Re  \lan (\A^{hk}(x)q_{h}q_{k})\la,\la\ran  -\La^{2}(t) \Re\lan (\A^{hk}(x)q_{h}q_{k})\om,\om\ran (\Re \lan 
    \la,\om\ran)^{2}\cr
    + \La(t)\Re(\lan (\A^{hk}(x)q_{h}q_{k})\om,\la\ran
	-\lan (\A^{hk}(x)q_{h}q_{k})\la,\om\ran) 
	\Re \lan \la,\om\ran 
	\cr\geq \kappa\, |q|^{2} |\la|^{2}
\end{gathered}
\end{equation}
	for any $q\in\R^{n}$, $\la,\, \om\in \C^{m}$, $|\om|=1$, $t>0$ and for almost every $x\in\Om$.

If $\vf(t)=t^{p-2}$ and then $\La(t)=-(1-2/p)$, conditions \eqref{eq:strongphi}, \eqref{eq:cond1str} and \eqref{eq:weakphi}
coincide with  (17), (20) and (31) of \cite{DLP}, respectively. 

We remark that, if condition \eqref{eq:condL} is satisfied, then inequalities
\eqref{eq:strongphi} and \eqref{eq:weakphi} for any $t>0$ are equivalent to 
\begin{gather*}
\Re \lan \A^{hk}(x)\xi_k, \xi_h \ran -
\La_{\infty}^{2} \Re \lan (\A^{hk}(x)-(\A^{kh})^*(x))\om,\xi_h \ran \Re \lan \om, \xi_k\ran 
 \\
 + \La_{\infty} \lan  \A^{hk}(x)\om, \om \ran  \Re \lan \om, \xi_k \ran  \Re \lan \om, \xi_h\ran
 \geq \kappa |\xi|^2
\end{gather*}
and
\begin{equation}\label{eq:newweakcond}
\begin{gathered}
  \Re  \lan (\A^{hk}(x)q_{h}q_{k})\la,\la\ran  -\La_{\infty}^{2} \Re\lan (\A^{hk}(x)q_{h}q_{k})\om,\om\ran (\Re \lan 
    \la,\om\ran)^{2}\cr
    + \La_{\infty}\Re(\lan (\A^{hk}(x)q_{h}q_{k})\om,\la\ran
	-\lan (\A^{hk}(x)q_{h}q_{k})\la,\om\ran) 
	\Re \lan \la,\om\ran 
	\cr\geq \kappa\, |q|^{2} |\la|^{2},
\end{gathered}
\end{equation}
respectively (see Remark \ref{rem:equiv}).

We can now rephrase some of our results as follows. 

\begin{theorem}
    Let $n=1$ and $A$ be the operator \eqref{eq:Aord}. Assume \eqref{eq:condL} holds. The 
   following statements are equivalent:
   \renewcommand{\labelenumi}{(\alph{enumi})}
\renewcommand{\theenumi}{(\alph{enumi})}
   \begin{enumerate}
\item\label{eq:1} the operator $A$ is strict $L^{\Phi}$-dissipative;
\item\label{eq:2} there exists $\kappa>0$ such that $A-k I (d^2/dx^2)$ is $L^{\Phi}$-dissipative;
\item\label{eq:3} the matrix $\{a_{ij}(x)\}$ satisfies the strong $\Phi$-ellipticity condition;
\item\label{eq:4} the matrix $\{a_{ij}(x)\}$ satisfies the integral  $\Phi$-ellipticity condition;
\item\label{eq:5} the matrix $\{a_{ij}(x)\}$ satisfies the weak $\Phi$-ellipticity condition.
\end{enumerate}
\end{theorem}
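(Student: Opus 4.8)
The plan is to prove the equivalences by establishing a cycle of implications, exploiting the fact that in dimension $n=1$ the differential operator $\A^{hk}(x)\de_k$ collapses to the single matrix $\A(x)$ and the tensor indices $h,k$ take only the value $1$. This drastically simplifies the three ellipticity conditions. First I would observe that the equivalence \ref{eq:1} $\Leftrightarrow$ \ref{eq:2} is already contained in Corollary \ref{cor:2}, so the genuine work concerns the ellipticity conditions \ref{eq:3}, \ref{eq:4} and \ref{eq:5}.

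Next I would unwind each ellipticity condition in the case $n=1$. Setting $h=k=1$ in \eqref{eq:strongphi}, the mixed term $\A^{hk}-(\A^{kh})^*$ becomes $\A-\A^*$, and writing $\xi_1=\la$, the strong $\Phi$-ellipticity condition turns into precisely the pointwise inequality appearing in Corollary \ref{co:equivstrict} (after using \eqref{eq:condL} to replace $\La(t)$ by $\La_\infty$, as noted in the remark following \eqref{eq:newweakcond}). This gives \ref{eq:1} $\Leftrightarrow$ \ref{eq:3} directly through Corollary \ref{co:equivstrict}. For the integral $\Phi$-ellipticity \ref{eq:4}, I would invoke Lemma \ref{le:lemma2}: integral $\Phi$-ellipticity is by definition condition \eqref{eq:cond1str}, which Lemma \ref{le:lemma2} identifies with strict $L^\Phi$-dissipativity, yielding \ref{eq:1} $\Leftrightarrow$ \ref{eq:4} with essentially no extra argument.

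The remaining and most delicate link is \ref{eq:5}, the weak (Legendre-Hadamard) $\Phi$-ellipticity. When $n=1$ the only scalar $q\in\R$ enters through $q_hq_k=q^2$, so $\A^{hk}q_hq_k=q^2\A(x)$ and, since the inequality \eqref{eq:newweakcond} is homogeneous of degree $2$ in $q$, one may normalize $q=1$. The condition then reduces to exactly the pointwise inequality of Corollary \ref{co:equivstrict}, with the bound $\kappa|\la|^2$ on the right. Thus \ref{eq:5} also coincides with \ref{eq:1} via Corollary \ref{co:equivstrict}. The main obstacle I anticipate is purely bookkeeping: one must carefully track how $\xi$ versus $\la$, the normalization $|\om|=1$, and the passage from the $t$-dependent $\La(t)$ to $\La_\infty$ interact, making sure that the quantifier ``for any $t>0$'' in the ellipticity definitions matches the $\La_\infty$ form used in Corollary \ref{co:equivstrict}; this is precisely what the remark after \eqref{eq:newweakcond} guarantees under \eqref{eq:condL}.

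In summary, the proof assembles as: \ref{eq:1} $\Leftrightarrow$ \ref{eq:2} by Corollary \ref{cor:2}; \ref{eq:1} $\Leftrightarrow$ \ref{eq:4} by Lemma \ref{le:lemma2}; and \ref{eq:1} $\Leftrightarrow$ \ref{eq:3}, \ref{eq:1} $\Leftrightarrow$ \ref{eq:5} by specializing the strong and weak $\Phi$-ellipticity conditions to $n=1$ and recognizing the resulting pointwise inequality as that of Corollary \ref{co:equivstrict}. No new estimates are needed; the entire argument is a matter of recognizing that all five conditions collapse, in dimension one, to the single algebraic inequality characterizing strict $L^\Phi$-dissipativity.
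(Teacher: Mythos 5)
Your proposal is correct and follows essentially the same route as the paper: (a)$\Leftrightarrow$(b) via Corollary \ref{cor:2}, (a)$\Leftrightarrow$(d) via Lemma \ref{le:lemma2} (no lower order terms), and the ellipticity conditions handled by specializing to $n=1$ and invoking Corollary \ref{co:equivstrict} together with the remark that \eqref{eq:condL} lets one pass from $\La(t)$ to $\La_\infty$. The only cosmetic difference is that the paper deduces (c)$\Leftrightarrow$(e) by noting the two conditions coincide when $n=1$ and then links (e) to (a), whereas you link both (c) and (e) to (a) directly by the same observation.
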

\begin{proof}
The equivalence between \ref{eq:1} and \ref{eq:2} is given by Corollary \ref{cor:2}. 
The equivalence between  \ref{eq:1} and  \ref{eq:5} was proved in Corollary \ref{co:equivstrict}.
 It is also clear that,
 when $n=1$, \eqref{eq:strongphi} and \eqref{eq:weakphi} coincide and then
  \ref{eq:3} and  \ref{eq:5} are equivalent. Finally, we have already remarked that, if the operator has no lower order terms,  conditions \ref{eq:1} and \ref{eq:4} are equivalent.
\end{proof}

 An analogous result holds in any dimension for the operator \eqref{eq:47}.

 \begin{theorem}
    Let $n\geq 2$ and $A$ be the operator \eqref{eq:47}. Assume \eqref{eq:condL} holds. The 
   following statements are equivalent:
   \renewcommand{\labelenumi}{(\alph{enumi})}
\renewcommand{\theenumi}{(\alph{enumi})}
   \begin{enumerate}
\item\label{eq:1b} the operator $A$ is strict $L^{\Phi}$-dissipative;
\item\label{eq:2b} there exists $\kappa>0$ such that $A-k \Delta$ is $L^{\Phi}$-dissipative;
\item\label{eq:3b} the matrix $\{a_{ij}(x)\}$ satisfies the integral  $\Phi$-ellipticity condition;
\item\label{eq:4b} the matrix $\{a_{ij}(x)\}$ satisfies the weak $\Phi$-ellipticity condition.
\end{enumerate}
Moreover, if the matrix $\{a_{ij}(x)\}$ satisfies the strong $\Phi$-ellipticity condition, then 
\ref{eq:1b}-\ref{eq:4b} hold.
\end{theorem}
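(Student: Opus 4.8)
The plan is to reduce each of the four statements, via the algebraic characterizations already proved, to a pointwise inequality on the matrices $\A^{h}(x)$, and then to exploit the diagonal structure $\A^{hk}=\A^{h}\delta_{hk}$ of the operator \eqref{eq:47}. For an $m\times m$ matrix $M$ I abbreviate
\[
Q_{M}(\la,\om)=\Re\lan M\la,\la\ran-\La_{\infty}^{2}\Re\lan M\om,\om\ran(\Re\lan\la,\om\ran)^{2}+\La_{\infty}\Re(\lan M\om,\la\ran-\lan M\la,\om\ran)\Re\lan\la,\om\ran,
\]
so that $M\mapsto Q_{M}(\la,\om)$ is linear and \eqref{condnewstrict} reads $Q_{\A^{h}(x)}(\la,\om)\geq\kappa'|\la|^{2}$. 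The equivalence of \ref{eq:1b} and \ref{eq:3b} is then immediate: integral $\Phi$-ellipticity is, by definition, the existence of $\kappa>0$ for which \eqref{eq:cond1str} holds, and Lemma \ref{le:lemma2} states that this is precisely strict $L^{\Phi}$-dissipativity, since the operator has no lower order terms.

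For \ref{eq:1b}$\Leftrightarrow$\ref{eq:4b} I would start from Theorem \ref{thx:3strict}, which gives \ref{eq:1b} iff there is $\kappa'>0$ with $Q_{\A^{h}(x)}(\la,\om)\geq\kappa'|\la|^{2}$ for almost every $x$, every $h$, and all $\la,\om$ with $|\om|=1$. Under \eqref{eq:condL}, weak $\Phi$-ellipticity is equivalent to \eqref{eq:newweakcond} by Remark \ref{rem:equiv}; and because $\A^{hk}q_{h}q_{k}=\sum_{h}q_{h}^{2}\A^{h}$ for the operator \eqref{eq:47}, linearity of $Q$ rewrites the left-hand side of \eqref{eq:newweakcond} as $\sum_{h}q_{h}^{2}\,Q_{\A^{h}(x)}(\la,\om)$, while $|q|^{2}=\sum_{h}q_{h}^{2}$. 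Thus \ref{eq:4b} says $\sum_{h}q_{h}^{2}\,Q_{\A^{h}}(\la,\om)\geq\kappa|q|^{2}|\la|^{2}$. Multiplying the $h$-th inequality $Q_{\A^{h}}\geq\kappa'|\la|^{2}$ by $q_{h}^{2}\geq0$ and summing yields \ref{eq:4b}, whereas the choice $q=e_{k}$ recovers the $k$-th inequality; hence the two conditions are equivalent.

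For \ref{eq:1b}$\Leftrightarrow$\ref{eq:2b} note that $A-\kappa\Delta=\de_{h}((\A^{h}-\kappa I)\de_{h})$, so by Theorem \ref{thx:3} it is $L^{\Phi}$-dissipative iff $Q_{\A^{h}(x)-\kappa I}(\la,\om)\geq0$ for every $h$. Using $|\om|=1$ and the fact that $\Re(\lan\om,\la\ran-\lan\la,\om\ran)=0$ (Hermitian symmetry), a short computation gives $Q_{\kappa I}(\la,\om)=\kappa(|\la|^{2}-\La_{\infty}^{2}(\Re\lan\la,\om\ran)^{2})$, whence by linearity
\[
Q_{\A^{h}-\kappa I}(\la,\om)=Q_{\A^{h}}(\la,\om)-\kappa\bigl(|\la|^{2}-\La_{\infty}^{2}(\Re\lan\la,\om\ran)^{2}\bigr).
\]
Invoking the two-sided estimate \eqref{eq:vecchia54}, the existence of $\kappa>0$ making $A-\kappa\Delta$ dissipative is equivalent to the existence of $\kappa'>0$ with $Q_{\A^{h}}\geq\kappa'|\la|^{2}$, i.e. to \ref{eq:1b} through Theorem \ref{thx:3strict}. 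This is exactly the argument of Theorem \ref{th:2}, carried out simultaneously for every $\A^{h}$.

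Finally, the concluding clause is automatic: strong $\Phi$-ellipticity implies integral $\Phi$-ellipticity—condition \eqref{eq:strongphi} forces the integrand of \eqref{eq:cond1str} to dominate $\kappa|\nabla v|^{2}$ pointwise, as already noted—hence implies \ref{eq:3b}, which the preceding steps show to be equivalent to \ref{eq:1b}, \ref{eq:2b} and \ref{eq:4b}. I do not anticipate a real obstacle, since the theorem essentially repackages Theorems \ref{thx:3strict} and \ref{thx:3}; the only point demanding care is the bookkeeping in the \ref{eq:1b}$\Leftrightarrow$\ref{eq:2b} step, namely verifying that the antisymmetric commutator term in $Q$ contributes nothing to the $\kappa I$ correction.
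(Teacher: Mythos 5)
Your proposal is correct, and for three of the four equivalences it coincides with the paper's argument: \ref{eq:1b}$\Leftrightarrow$\ref{eq:3b} is read off from Lemma \ref{le:lemma2bis} (the paper's choice; your Lemma \ref{le:lemma2} is the same statement for the general operator) since there are no lower order terms; \ref{eq:1b}$\Leftrightarrow$\ref{eq:4b} is exactly the paper's reduction of \eqref{eq:newweakcond} to the $n$ separate inequalities via linearity in $q_h^2$ and the choice $q=e_k$, followed by Theorem \ref{thx:3strict}; and the final clause about strong $\Phi$-ellipticity is handled identically. The one genuine divergence is \ref{eq:1b}$\Leftrightarrow$\ref{eq:2b}: the paper simply cites Corollary 1 of \cite{CM2022}, which establishes this equivalence for the general operator \eqref{eq:A}, whereas you prove it internally by writing $A-\kappa\Delta=\de_h((\A^h-\kappa I)\de_h)$, applying Theorem \ref{thx:3} to the shifted coefficients, computing $Q_{\kappa I}(\la,\om)=\kappa\bigl(|\la|^2-\La_\infty^2(\Re\lan\la,\om\ran)^2\bigr)$ (the antisymmetric term indeed vanishes since $\lan\om,\la\ran-\lan\la,\om\ran$ is purely imaginary), and invoking the two-sided bound \eqref{eq:vecchia54} exactly as in Theorem \ref{th:2}. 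Your route buys self-containedness and makes transparent why the diagonal structure of \eqref{eq:47} suffices, at the cost of being restricted to that structure; the paper's citation covers the general tensor $\A^{hk}$ but imports the argument from outside. Both are valid, and your computation checks out.
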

\begin{proof}
The equivalence between \ref{eq:1b} and \ref{eq:2b} was proved in \cite[Corollary 1]{CM2022}
for the more general operator \eqref{eq:A}. The equivalence between \ref{eq:1b} and \ref{eq:3b}
is given by Lemma \ref{le:lemma2bis}. To prove the equivalence between \ref{eq:1b} 
and \ref{eq:4b}, we first note that condition \eqref{eq:newweakcond} in the present case
is
\begin{gather*}
  \sum_{h=1}^{n} \Big(\Re \lan (\A^{h}(x)q_{h}^{2})\la,\la\ran  -\La_{\infty}^{2} \Re\lan (\A^{h}(x)q_{h}^{2})\om,\om\ran (\Re \lan 
    \la,\om\ran)^{2}\cr
    + \La_{\infty}\Re(\lan (\A^{h}(x)q_{h}^{2})\om,\la\ran
	-\lan (\A^{h}(x)q_{h}^{2})\la,\om\ran) 
	\Re \lan \la,\om\ran \Big)
	\\
	\geq \kappa\, |q|^{2} |\la|^{2},
\end{gather*}
for any $q\in\R^{n}$, $\la,\, \om\in \C^{m}$, $|\om|=1$, $t>0$ and for almost every $x\in\Om$.
Thanks to the arbitrariness of the vector $q$, this is equivalent to the $n$ inequalities
\begin{gather*}
 \Re \lan \A^{h}(x) \la,\la\ran -\La_{\infty}^{2}\Re\lan 
	  \A^{h}(x)\om,\om\ran (\Re \lan\la,\om\ran)^{2}
	   \cr
	   + \La_{\infty}
	    \Re(\lan \A^{h}(x)\om,\la\ran -
	    \lan \A^{h}(x)\la,\om\ran)
	       \Re \lan \la,\om\ran   \geq \kappa \, |\la|^{2}
	       \end{gather*}
	for any $\la,\om\in\C^{m}$, $|\om|=1$, $h=1,\ldots,n$. 
	Therefore Theorem \ref{thx:3strict} gives the equivalence between \ref{eq:1b} 
and \ref{eq:4b}.
Finally, we know that the strong $\Phi$-ellipticity  implies
the integral $\Phi$-ellipticity and the proof is complete.
\end{proof}

\end{document}